\documentclass[12pt]{amsart}

\usepackage{amssymb}

\setlength{\topmargin}{0cm} \setlength{\textheight}{22 cm}
\setlength{\textwidth}{17 cm} \setlength{\oddsidemargin}{-0.1cm}
\setlength{\evensidemargin}{-0.1cm}

\theoremstyle{plain}

\newtheorem{thm}{Theorem}
\newtheorem{cor}[thm]{Corollary}
\newtheorem{lem}[thm]{Lemma}
\newtheorem{prop}[thm]{Proposition}

\theoremstyle{definition}
\newtheorem{defn}{Definition}

\theoremstyle{remark}
\newtheorem{rem}{Remark}

\theoremstyle{remark}

\begin{document}

\title{An algebra involving braids and ties}

\author{F. Aicardi}
\address{ICTP, Trieste(Italy)}
\email{faicardi@ictp.it}
\author{J. Juyumaya}
\address{Instidudo  de  matem\'aticas, Universidad de Valpara\'{\i}so, Gran Breta\~na 1111, Valpara\'{\i}so, Chile
 }
 \email{ juyumaya@uvach.cl}




\begin{abstract}
In this note we  study a  family  of  algebras ${\mathcal E}_n(u)$   with one parameter defined by
generators and relations. The set of generators contains  the  generators of the
usual braids algebra, and another set of  generators  which is  interpreted as ties between
consecutive strings. We also study the representations theory of
the algebra when the parameter is specialized to 1.
\end{abstract}

\maketitle

\section*{Foreword}
This  article  was  written  in  the  2000  year  and  published as  ICTP preprint \cite{AJ}.
The  authors submitted  it  to  a  journal,   but  it  was  rejected  since   according  to the  referee  report
   the  considered  algebra  was the Hecke algebra with another vesture; moreover,
  this algebra should be not related to knot theory.

However, in  the  last  years  the algebras  ${\mathcal E}_n(u)$  was  revisited  by  Ryom-Hansen \cite{rh},  who  found  explicit  bases and  classified  the irreducible  representations.   Moreover,  in  2013 Banjo \cite{eb} determined the complex generic representation theory,  showing that a certain specialization of
this algebra is isomorphic to the small ramified partition algebra.   The  authors of  the  article  have successively proved  that ${\mathcal E}_n(u)$${\mathcal E}_n(u)$   admits  a trace \cite{aijuMMJ1} and    introduced  the  tied  links,  defining   different invariants  for   them \cite{aijuJKTR1}.  More  recently, they  have  considered  Kauffman type  invariants for  tied  links \cite{aijuMZ}  and  have  introduced  a  new algebra (a  generalization of  the BMW  algebra) which  is  related  to ${\mathcal E}_n(u)$. A  Referee  of  \cite{aijuMZ}  suggested to  put in  ArXiv the  present  article.

\section{Introduction}
In this note we continue to study an algebra ${\mathcal E}_n(u)$ of type $A$
 with one parameter $u$ defined  over the fields of rational
functions ${\Bbb C}(u)$. This algebra
was introduced in \cite{ju}, and the definition arose from
 an abstracting procedure of
a non-standard presentation for the  Yokonuma-Hecke algebra (\cite{jk}).
The definition make use of generators and braids relations  between the
generators.

Notice that  the definition of the algebra
${\mathcal E}_n(u)$ that we give  in section 2 includes
less relations than the original definition \cite{ju}. The
definition of our algebra is now using a reduced system of relations.

In  sections 3 and 4, we introduce and study a diagrammatic
interpretation for the generators defining of ${\mathcal E}_n(u)$, which
become useful to obtain information on the linear part of our algebra.

Also, in this note we study the representation theory of the
algebra, whenever the parameter is specialized  to 1. In fact, in
section 5, we construct  families of representations arising
from the hyperoctahedral group. Using our methods, we describe the
theory of representation for low dimension. For instance, we show
the representation theory for ${\mathcal E}_3(1)$.

 \newcount\beziercnt \beziercnt = 100   

 \newcount\grcalca
 \newcount\grcalcb
 \newcount\grcalcc
 \newcount\grcalcd
 \newcount\mordiam
 \newcount\radius
 \newcount\parametr
 \newcount\breit
\newcount\width

 \mordiam = 8   
 \radius = 3    
 \parametr = 0  
 \newcommand\thlines{\thinlines} 

 \newcommand{\braid}{
  \divide \parametr by -3
  \advance \parametr by 10                                 
   \thlines                                                             
   \bezier{\beziercnt}(\parametr,10)(10,7.5)(7,6)
   \bezier{\beziercnt}(0,0)(0,2.5)(3,4)                 
   \bezier{\beziercnt}(0,10)(0,7.5)(5,5)               
 \bezier{\beziercnt}(\parametr,0)(10,2.5)(5,5)
            \parametr = 0 }                       

\newcommand{\zero}[2]{
                    \grcalca = #1
                    \put(\grcalca,#2){\line(0,1){10}}
                   \advance \grcalca by 10
                    \put(\grcalca,#2){\line(0,1){10}}
                    }

\newcommand{\zerr}[2]{
                    \grcalca = #1
                    \advance \grcalca by 10
                    \put(\grcalca,#2){\line(0,1){10}}
                    }

\newcommand{\zerl}[2]{
                    \grcalca = #1
                    \put(\grcalca,#2){\line(0,1){10}}
                   }

\newcommand{\ei}[2]{                    \grcalca = #1
                    \grcalcb = #2
                     \put(\grcalca,\grcalcb){\line(0,1){10}}

                    \advance \grcalca by 10
                    \advance \grcalcb by 5

                    \put(\grcalca,#2){\line(0,1){10}}
                    \put(#1,\grcalcb){\dashbox{1}(10,0)}
                    }

\newcommand{\pzero}[2]{
                    \grcalca = #1
                    \put(\grcalca,#2){\line(0,1){2}}
                   \advance \grcalca by 10
                    \put(\grcalca,#2){\line(0,1){2}}
                    }

\newcommand{\pzerr}[2]{
                    \grcalca = #1
                    \advance \grcalca by 10
                    \put(\grcalca,#2){\line(0,1){2}}
                    }

\newcommand{\pzerl}[2]{
                    \grcalca = #1
                    \put(\grcalca,#2){\line(0,1){2}}
                   }

\newcommand{\pei}[2]{                    \grcalca = #1
                    \grcalcb = #2
                     \put(\grcalca,\grcalcb){\line(0,1){2}}

                    \advance \grcalca by 10
                    \advance \grcalcb by 1

                    \put(\grcalca,#2){\line(0,1){2}}
                    \put(#1,\grcalcb){\dashbox{1}(10,0)}
                    }

 \newcommand{\ibraid}{
 \divide \parametr by -3
  \advance \parametr by 10                         
   \thlines                                        
   \bezier{\beziercnt}(\parametr,10)(10,7.5)(5,5)  
   \bezier{\beziercnt}(0,0)(0,2.5)(5,5)            
   \bezier{\beziercnt}(0,10)(0,7.5)(3,6)           
   \bezier{\beziercnt}(\parametr,0)(10,2.5)(7,4)
     \parametr = 0 }                               

\newcommand{\sibrmor}{\brmor}
\newcommand{\isibrmor}{\ibrmor}
\newcommand{\simor}{\brmor}
\newcommand{\isimor}{\ibrmor}


\newcommand{\nubrmor}{
\thlines                                 
   \bezier{\beziercnt}(9.8,10)(10,8.5)(6,6)  
   \bezier{\beziercnt}(0,0)(0,1.5)(4,4)     
   \bezier{\beziercnt}(0,10)(0,8.5)(3,7)    
   \bezier{\beziercnt}(9.8,0)(10,1.5)(7,3)
\bezier{\beziercnt}(4,4)(4,4)(6,6)
\bezier{\beziercnt}(3,7)(7,7)(7,3)\bezier{\beziercnt}(3,7)(3,3)(7,3)}

\newcommand{\inubrmor}{ 
\thlines
  \bezier{\beziercnt}(4,6)(4,6)(6,4)
   \bezier{\beziercnt}(9.8,10)(10,8.5)(7,7)  
   \bezier{\beziercnt}(0,0)(0,1.5)(3,3)     
   \bezier{\beziercnt}(0,10)(0,8.5)(4,6)    
   \bezier{\beziercnt}(9.8,0)(10,1.5)(6,4)
\bezier{\beziercnt}(3,3)(3,7)(7,7)
\bezier{\beziercnt}(3,3)(7,3)(7,7)}

\newcommand{\rmor}[1]{
\put(0,10){\twist{2}{-4}}
\put(10,10){\twist{-2}{-4}}
\put(5,4){\circle{8}}
\put(5,4){\makebox(0,0){$#1$}}}

 \newcommand{\dualmor}{                      
 \thlines                                    %
 \put(-8,5){\oval(6,6)[l]}                   
 \bezier{\beziercnt}(-.2,10)(0,8)(-4,4)       %
 \bezier{\beziercnt}(-4,4)(-6,2)(-8,2)
\bezier{\beziercnt}(-8,8)(-6,8)(-5,7)        %
 \bezier{\beziercnt}(-1,3)(0,2)(-.2,0)}     %

\newcommand{\dualmors}{                      
 \thlines                                    %
 \put(-8,5){\oval(6,6)[l]}                   
 \bezier{\beziercnt}(-.2,0)(0,2)(-4,6)       %
 \bezier{\beziercnt}(-4,6)(-6,8)(-8,8)
\bezier{\beziercnt}(-8,2)(-6,2)(-5,3)        %
 \bezier{\beziercnt}(-1,7)(0,8)(-.2,10)}     %

 \newcommand{\idualmor}{                      
 \thlines                                     %
 \put(8,5){\oval(6,6)[r]}                     
 \bezier{\beziercnt}(0,10)(0,8)(4,4)           %
 \bezier{\beziercnt}(4,4)(6,2)(8,2)
\bezier{\beziercnt}(8,8)(6,8)(5,7)            %
 \bezier{\beziercnt}(1,3)(0,2)(0,0)}         %

 \newcommand{\idualmors}{                      
 \thlines                                     %
 \put(8,5){\oval(6,6)[r]}                     
 \bezier{\beziercnt}(0,0)(0,2)(4,6)           %
 \bezier{\beziercnt}(4,6)(6,8)(8,8)
\bezier{\beziercnt}(8,2)(6,2)(5,3)            %
 \bezier{\beziercnt}(1,7)(0,8)(0,10)}         %

\newcommand{\dualbrmors}{\dualmors\put(-3,5){\circle{6}}}
\newcommand{\idualbrmors}{\idualmors\put(3,5){\circle{6}}}
\newcommand{\dualbrmor}{\dualmor\put(-3,5){\circle{6}}}
\newcommand{\idualbrmor}{\idualmor\put(3,5){\circle{6}}}

 \newcommand{\mor}[2]{
   \thlines
   \grcalca = #2
   \advance \grcalca by -\radius
 \advance \grcalca by -\radius
   \divide \grcalca by 2                               
   \put(0,#2) {\line(0,-1){\grcalca}}             
   \put(0,0) {\line(0,1){\grcalca}}                
   \grcalca = #2                                        
   \divide \grcalca by 2                           
   \put(0,\grcalca) {\circle{\mordiam}}     
   \put(0,\grcalca) {\makebox(0,0){$#1$}} } 

\newcommand{\umor}[2]{                          %
 \thlines                                                         
 \grcalca = #2                                                
\advance \grcalca by -\radius
 \advance \grcalca by -\radius
\put(0,\grcalca) {\line(0,-1){\grcalca}}       
\advance \grcalca by \radius
\put(0,\grcalca) {\circle{\mordiam}}
\put(0,\grcalca) {\makebox(0,0){$#1$}} }        
\newcommand{\coumor}[2]{                          %
 \thlines                                                          
 \grcalca = #2                                                  
     \advance \grcalca by -\radius
 \advance \grcalca by -\radius
\put(0,6) {\line(0,1){\grcalca}}      
\put(0,\radius) {\circle{\mordiam}}             
\put(0,\radius) {\makebox(0,0){$#1$}} }         

  \newcommand{\multmor}[3]{                    
    \thlines                                   
    \grcalca = #2
    \grcalcb = #3                              
    \advance \grcalca by 5                     
    \put(-2.5,0){\framebox(\grcalca,\grcalcb)  
     {$#1$}}}

 \newcommand{\twist}[2]{                     
   \thlines                                               
   \grcalca = #1                                      
   \divide \grcalca by 2                          
   \grcalcb = #2                                      
   \divide \grcalcb by 2                          
   \grcalcc = #2                                      
   \divide \grcalcc by 3                          
   \bezier{\beziercnt}(0,0)(0,\grcalcc)(\grcalca,\grcalcb)
   \grcalcd = #2
   \advance \grcalcd by -\grcalcc
   \multiply \grcalcc by 3
   \bezier{\beziercnt}(\grcalca,\grcalcb)(#1,\grcalcd)(#1,#2) }

 \newcommand{\idgr}[1]{                     
   \thlines                                 
   \put(0,0) {\line(0,1){#1}}}              

  \newcommand{\rigco}[1]{                   
    \thlines                                
    \put(0,0) {\line(1,0){#1}}              
    \put(#1,0) {\line(-5,2){#1}}}           

\newcommand{\ig}[1]{\idgr{#1}}

  \newcommand{\lefco}[1]{                   
    \thlines                                
    \put(#1,0) {\line(0,1){5}}              
    \put(0,0) {\line(1,0){#1}}              
    \put(0,0) {\line(5,2){#1}}}             

  \newcommand{\rigmu}[1]{                   
    \thlines                                
    \put(0,0) {\line(0,1){5}}               
    \put(0,5) {\line(1,0){#1}}              
    \put(#1,5) {\line(-5,-2){#1}}}          

  \newcommand{\lefmu}[1]{                   
    \thlines                                
    \put(#1,0) {\line(0,1){5}}              
    \put(0,5) {\line(1,0){#1}}              
    \put(0,5) {\line(5,-2){#1}}}            

\newcommand{\objo}[1]{
   \put(0,3){\makebox(0,0)[b]{\mbox{$#1$}}}}

 \newcommand{\oo}{\objo}

 \newcommand{\obju}[1]{
   \put(0,-5){\makebox(0,0)[b]{\mbox{$#1$}}}} 
\newcommand{\ou}{\obju}

\newcommand{\bgr}[3]{
   \unitlength=#1 
\divide \unitlength by 2
\advance  \unitlength by #1
 \grcalca = #3
   \advance \grcalca by 6
 \breit = #2
\advance \breit  by 5
   \begin{picture}(#2,\grcalca)}

  \def\eeee{\put(\breit,0){\obju{\mbox{\normalsize\rule{0.75ex}{1.5ex}}}}}
\thlines

\newcommand{\egr}{\end{picture}}

\newcommand{\fbgr}[2]{\scriptsize\bgr{.4ex}{#1}{#2}}
\newcommand{\fegr}{\egr\nml
}
\newcommand{\cbgr}[2]{\scriptsize\begin{center}\bgr{.4ex}{#1}{#2}}
\newcommand{\cegr}{\egr\end{center}\nml\vspace{1ex}
}

 \newcommand{\bsgr}[1]{
   \newsavebox{#1}
   \savebox{#1}(0,0)[bl]}

\section{Generators}
\begin{defn} Let  $n$ be a natural number.  The
algebra ${\mathcal E}_n(u)$ is defined as the associative
algebra over the field of rational function ${\Bbb C} (u)$
with generators
$$
1,T_1, \ldots ,T_{n-1}, E_1,\ldots ,E_{n-1}
$$
subject to the following relations:

\begin{equation}\label{eq1}
T_iT_j=T_jT_i  \quad if \quad \vert i-j \vert >1
\end{equation}
\begin{equation}\label{eq2}
T_iT_jT_i=T_jT_iT_j \quad  if \quad \vert i-j \vert =1
\end{equation}
\begin{equation}\label{eq3}
E_i^2=E_i
\end{equation}
\begin{equation}\label{eq4}
E_iE_j=E_jE_i  \quad  \forall  i,j
\end{equation}
\begin{equation}\label{eq5}
 E_iT_i=T_iE_i
\end{equation}
\begin{equation}\label{eq6}
E_iT_j=T_jE_i  \quad if \quad \vert i-j\vert >1
\end{equation}
\begin{equation}\label{eq7}
 E_jT_iT_j=T_iT_jE_i  \quad if \quad |i-j|=1
\end{equation}
\begin{equation}\label{eq8}
E_iE_jT_j=E_iT_jE_i=T_jE_iE_j  \quad if \quad\vert i-j\vert =1
\end{equation}
\begin{equation}\label{eq9}
T_i^2=1 + (u^{-1}-1)E_i(1-T_i)
\end{equation}
\end{defn}
The original definition of the algebra ${\mathcal E}_n(u)$ given in \cite{ju}
contained 2 superfluous
 relations with respect to the above definition. Throughout  of the
diagrammatical interpretation of the generators,
in terms of braids and ties, it was easy to remove such relations.


As in Lemma 3.1 in \cite{bw} one can  prove that the algebras
${\mathcal E}_n(u)$ are finite dimensional. In fact, firstly note
that any word of ${\mathcal E}_2(u)$ is a linear combination of words
 in  $1$, $T_1$, $E_1$ and $T_1E_1$. Now, suppose that any words of
${\mathcal E}_n(u)$ in
$1$, $T_1$, $\ldots$, $T_{n-1}$,  $E_1$, $\ldots$, $E_{n-1}$ can be written
as a linear combination of  words having at most one
$T_{n-1}$, $E_{n-1}$,  or $T_{n-1}E_{n-1}$.
Then, using the defining relations of ${\mathcal E}_n(u)$  it is not hard
 to verify that any word below is a linear combination of words
having at most one $T_n$, $E_n$ or $E_nT_n$:

$$
\begin{array}{lll}
  T_nR_{n-1}T_n\quad  & T_nR_{n-1}T_nE_n\quad & T_nR_{n-1}E_n \\
T_nE_nR_{n-1}T_n\quad   &  T_nE_nR_{n-1}T_nE_n\quad & T_nE_nR_{n-1}E_n \\
E_nR_{n-1}T_n \quad& E_nR_{n-1}T_nE_n\quad &  E_nR_{n-1}E_n,
\end{array}
$$

where $R_{n-1}\in \{1, T_{n-1}, E_{n-1}, T_{n-1}E_{n-1}\}$. Then
 from an inductive argument one can  deduce the following proposition.

\begin{prop}\label{finite}
In the algebra ${\mathcal E}_n(u)$ any word in
$1$, $T_1$, $\ldots$, $T_{n-1}$,  $E_1$, $\ldots$, $E_{n-1}$ is a
linear combination of words in $T_i$'s, $E_i$'s having at most one
$R_{n-1}$, where $R_{n-1}$ $\in$ $\{T_{n-1}$, $E_{n-1}$, $T_{n-1}E_{n-1}\}$.
Hence  ${\mathcal E}_n(u)$ is finite dimensional.
\end{prop}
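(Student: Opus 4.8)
The plan is to proceed by induction on $n$, essentially formalising the reduction sketched above. The case $n=1$ is trivial, and for $n=2$ one checks that the span of $1$, $T_1$, $E_1$ and $T_1E_1$ is stable under left multiplication by $T_1$ and by $E_1$: this needs only \eqref{eq3}, \eqref{eq5} and the quadratic relation \eqref{eq9} (for instance $T_1\cdot T_1=1+(u^{-1}-1)E_1-(u^{-1}-1)E_1T_1$ and $E_1\cdot T_1E_1=T_1E_1$), so that span is a left ideal containing $1$ and hence all of $\mathcal{E}_2(u)$, which is therefore at most $4$-dimensional.

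For the inductive step I would regard $\mathcal{E}_{n-1}(u)$ as the subalgebra of $\mathcal{E}_n(u)$ generated by $1,T_1,\dots,T_{n-2},E_1,\dots,E_{n-2}$ — finite dimensional by the induction hypothesis — and use that every generator of index at most $n-2$ commutes with $T_{n-1}$ and with $E_{n-1}$, by \eqref{eq1} and \eqref{eq6}. Given a word $w$ in the generators, I would first segment it as $c_0R^{(1)}c_1\cdots R^{(m)}c_m$ with each $c_j\in\mathcal{E}_{n-1}(u)$ and each $R^{(i)}\in\{T_{n-1},E_{n-1},T_{n-1}E_{n-1}\}$, collapsing adjacent occurrences of $T_{n-1},E_{n-1}$ by \eqref{eq3} and \eqref{eq5}, and then induct on the number $m$ of blocks. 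When $m\ge 2$, applying the induction hypothesis to $c_1$ rewrites it as a combination of words $v_1sv_2$ with $v_1,v_2$ of index at most $n-3$ and $s\in\{1,T_{n-2},E_{n-2},T_{n-2}E_{n-2}\}$; since $v_1,v_2$ commute with $T_{n-1}$ and $E_{n-1}$, merging $R^{(1)}$ with $R^{(2)}$ reduces to rewriting $R^{(1)}sR^{(2)}$ — exactly the nine words displayed before the statement, with $s$ in the role of $R_{n-1}$ there — as a linear combination of words with strictly fewer blocks of index $n-1$.

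This last rewriting is the heart of the matter, and the roughly thirty-six small identities it requires are where I expect almost all of the work to lie. The moves needed are: for $s=1$, collapse by \eqref{eq3} and \eqref{eq5}; for a $T_{n-2}$ caught between two $T_{n-1}$'s, use the braid relation \eqref{eq2}; for mixed patterns such as $T_{n-1}T_{n-2}E_{n-1}$ or $E_{n-1}T_{n-2}T_{n-1}$, use \eqref{eq7}; for an $E_{n-2}$ or an $E_{n-1}$ caught between index-$(n-1)$ letters, use \eqref{eq8} together with \eqref{eq4}; and whenever a $T_{n-1}^2$ is produced, expand it by \eqref{eq9}, which contributes only $1$, $E_{n-1}$ and $T_{n-1}E_{n-1}$ — one block each, after normalising $E_{n-1}T_{n-1}$ to $T_{n-1}E_{n-1}$ by \eqref{eq5}. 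In each case the result is a combination of words with fewer blocks of index $n-1$ (counting $T_{n-1}E_{n-1}$ as one block), the leftover low-index letters being absorbed into the neighbouring $c_j$'s without leaving $\mathcal{E}_{n-1}(u)$; iterating drives $m$ down to at most $1$.

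A word with at most one block is of the form $b_1R_{n-1}b_2$ with $b_1,b_2\in\mathcal{E}_{n-1}(u)$ and $R_{n-1}\in\{1,T_{n-1},E_{n-1},T_{n-1}E_{n-1}\}$, so a finite spanning set of $\mathcal{E}_{n-1}(u)$ yields one for $\mathcal{E}_n(u)$, with $\dim\mathcal{E}_n(u)\le 4\,(\dim\mathcal{E}_{n-1}(u))^2$; finite dimensionality follows. The only genuine obstacle is bookkeeping — setting up the case analysis so that the number of top-index blocks is manifestly a strictly decreasing quantity, instead of getting lost among the many rewritings; beyond that the argument is formal.
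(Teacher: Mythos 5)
Your proposal is correct and takes essentially the paper's route (the Birman--Wenzl-style induction as in Lemma 3.1 of \cite{bw}): segment a word by its top-index blocks, use the inductive hypothesis to replace whatever lies between two blocks by lower-index elements commuting with $T_{n-1}$, $E_{n-1}$ around a single middle letter, and thereby reduce everything to the same nine patterns displayed before the proposition, whose case-by-case verification via (\ref{eq1})--(\ref{eq9}) both you and the paper leave as routine. One slip to fix: generators of index exactly $n-2$ do \emph{not} commute with $T_{n-1}$ (only those of index at most $n-3$ do), but this does not affect your argument, since in the merging step you invoke commutation only for the words $v_1,v_2$ of index at most $n-3$.
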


Now, as in  the Iwahori-Hecke algebra we can take a system of linear
generates  for ${\mathcal E}_{n}(u)$ (which in our case will be redundant)
in the following way: we define $U_1 = \{1, T_1, E_1, T_1E_1\}$, and
$U_i$   by

$$
U_i := \{1\} \cup T_i U_{i-1} \cup E_i U_{i-1} \cup T_iE_i U_{i-1} \qquad
(2\leq i \leq n).
$$

Using induction and Proposition ~\ref{finite} we deduce that
 ${\mathcal E}_n(u)$ is generated linearly by all the products of the form
$u_1u_2\cdots u_{n-1}$, where $u_i\in U_i$. From where we deduce
\begin{equation}\label{Yes}
{\mathcal E}_{n+1}(u)=   \sum_{1\leq i\leq n} Y_iY_{i+1}\cdots
Y_{n}{\mathcal E}_{n}(u) + {\mathcal E}_{n}(u),
\end{equation}
where $Y_j \in \{T_j, E_j, T_jE_j\}$.

\begin{cor}\label{span}
A basis for ${\mathcal E}_2(u)$ is $\{1, T_1, E_1, T_1E_1\}$. And
${\mathcal E}_3(u)$ is spanned linearly by:
$$
L, LE_1, LE_2, LE_1E_2, LE_2T_1,
$$
where $L \in \{1, T_1, T_2, T_1T_2, T_2T_1, T_1T_2T_1\}$.
\end{cor}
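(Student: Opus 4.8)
The claim about ${\mathcal E}_2(u)$ is immediate: by Proposition \ref{finite} every word in $T_1,E_1$ is a linear combination of words with at most one factor from $\{T_1,E_1,T_1E_1\}$, so the set $\{1,T_1,E_1,T_1E_1\}$ spans, and it is manifestly of the claimed form $U_1$. For ${\mathcal E}_3(u)$ the plan is to start from the decomposition \eqref{Yes} with $n=2$, namely ${\mathcal E}_3(u)=Y_2{\mathcal E}_2(u)+{\mathcal E}_2(u)$ with $Y_2\in\{T_2,E_2,T_2E_2\}$, and then expand ${\mathcal E}_2(u)=\langle 1,T_1,E_1,T_1E_1\rangle$. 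This already shows ${\mathcal E}_3(u)$ is spanned by the sixteen words $w$ and $Y_2 w$ with $w\in\{1,T_1,E_1,T_1E_1\}$. The task is then purely to rewrite each of the twelve words of the form $Y_2 w$ so that it lands in the asserted spanning set $\{L,LE_1,LE_2,LE_1E_2,LE_2T_1\}$ with $L\in\{1,T_1,T_2,T_1T_2,T_2T_1,T_1T_2T_1\}$.

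First I would dispose of the easy cases. The words $T_2$, $E_2$, $T_2E_2$, $T_2E_1$ (using \eqref{eq6}: $T_2E_1=E_1T_2$, so this is $L E_1$ with... actually one must be careful — I want $E_1$ on the right, so rather keep $T_2E_1$ and note it equals $E_1T_2$, but the target form wants $LE_1$, so write it as such after commuting), $T_2E_1E_2$, $E_2T_1$, $E_2E_1$ (use \eqref{eq4} to get $E_1E_2$), and $T_2E_2T_1$ all reduce directly via \eqref{eq3}, \eqref{eq4}, \eqref{eq6}, \eqref{eq8} to words already on the list. The genuinely substantive cases are $T_2T_1$ (fine, it is one of the $L$'s), $T_2T_1E_1$, $E_2T_1E_1$, and $T_2E_2T_1E_1$. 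For $T_2T_1E_1$ I would use relation \eqref{eq8} in the form $E_1E_2T_1E_1=\dots$ — more to the point, I would commute and use \eqref{eq7}: from $E_2T_1T_2=T_1T_2E_1$ one extracts identities relating $E_2T_1$ to $T_1T_2E_1T_2^{-1}$-type expressions; but since we work without inverses a cleaner route is to multiply \eqref{eq7} by suitable $E$'s and use \eqref{eq8}. Concretely, $E_2T_1E_1 = E_2E_1T_1$ is wrong since $T_1,E_1$ need not commute past $E_2$ trivially — rather $E_2(T_1E_1)=E_2T_1E_1$, and by \eqref{eq8} with $i=1,j=2$ we have $E_1E_2T_2=E_1T_2E_1=T_2E_1E_2$, which together with the braid and commutation relations lets me pull the expression into the span of $\{E_1E_2, T_2E_1E_2, \dots\}$, hence of the stated generators.

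The main obstacle is exactly this last family: showing that words containing the pattern $E_2\cdots T_1E_1$ or $T_2E_2T_1E_1$ do not produce anything outside the list — in particular that no word of the form $L E_1 E_2$ with $L\notin\{1\}$, or $LE_2T_1$ with $L$ of length $>1$, is needed, and that the tie relations \eqref{eq8} genuinely collapse $E_iE_jT_j$-type words. I expect to handle this by a short finite case analysis: list the at most $6\times 5=30$ formal products $L X$ with $X\in\{1,E_1,E_2,E_1E_2,E_2T_1\}$, apply \eqref{eq1}, \eqref{eq2}, \eqref{eq3}, \eqref{eq4}, \eqref{eq6}, \eqref{eq7}, \eqref{eq8} to each word $Y_2 w$ arising from \eqref{Yes}, and check termination of the rewriting. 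Because every relation either removes a generator, or moves $E$'s to the right past $T$'s, or replaces $E_iE_jT_j$ by $T_jE_iE_j$ (reducing the number of $T$'s to the right of an $E$), the rewriting is visibly terminating, which guarantees the process closes within the claimed set. This is routine once set up, so I would present it as a table of reductions rather than prose.
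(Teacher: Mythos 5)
There is a genuine gap, and it starts with your reading of \eqref{Yes}. For $n=2$ that formula reads ${\mathcal E}_3(u)=Y_1Y_2\,{\mathcal E}_2(u)+Y_2\,{\mathcal E}_2(u)+{\mathcal E}_2(u)$ (the sum runs over $i=1,2$), not ${\mathcal E}_3(u)=Y_2\,{\mathcal E}_2(u)+{\mathcal E}_2(u)$. So the correct starting spanning set consists of the $52$ words $w$, $Y_2w$ and $Y_1Y_2w$ with $Y_1\in\{T_1,E_1,T_1E_1\}$, $Y_2\in\{T_2,E_2,T_2E_2\}$, $w\in\{1,T_1,E_1,T_1E_1\}$, not your $16$ words. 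Your reduced claim cannot be repaired by a rewriting argument, because it would bound $\dim{\mathcal E}_3(u)$ by $16$, while the $30$ listed elements are (at $u=1$) linearly independent by Theorem~\ref{ss}; also none of your $16$ words produces $L=T_1T_2$ or $L=T_1T_2T_1$. The twelve words $Y_2w$ you do treat are in fact the easy ones; the whole content of the corollary lies in the omitted words $Y_1Y_2w$ beginning with $E_1$ or $T_1E_1$, such as $E_1T_2$, $E_1T_2T_1$, $T_1E_1T_2$ and their right multiples by $w$.

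This leads to the second problem: you assert that the reduction can be carried out by a terminating monomial rewriting using only \eqref{eq1}--\eqref{eq8}. That cannot work for the omitted words. The paper's proof rests precisely on Lemma~\ref{lema}, whose identities, e.g.\ $E_jT_i=T_iT_jE_iT_j+(u-1)(T_iT_jE_iE_j+T_iE_iE_j)$, are derived using the quadratic (skein) relation \eqref{eq9} and express $E_1T_2$-type words as genuine linear combinations, with coefficients $(u-1)$, of elements of the target set $\{L,LE_1,LE_2,LE_1E_2,LE_2T_1\}$. Since those target elements are linearly independent for generic $u$ (again by specializing to $u=1$ and Theorem~\ref{ss}), a word like $E_1T_2$ is not equal to any single listed monomial, so no rewriting confined to the monomial relations \eqref{eq1}--\eqref{eq8} can reach the claimed spanning set; relation \eqref{eq9} is indispensable. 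Even within your twelve words, $E_2T_1E_1$ and $T_2E_2T_1E_1$ already need more than you indicate: you need an identity such as $E_2T_1E_1=T_1E_1E_2$, which the paper obtains only through consequences of \eqref{eq9} (via $T_i^{-1}$, cf.\ \eqref{re14}), not by a direct application of \eqref{eq8}. To fix the proof, restore the $Y_1Y_2{\mathcal E}_2(u)$ terms and reduce them using Lemma~\ref{lema}.1 and \ref{lema}.2 (with $i=2$, $j=1$), as the paper does, together with \eqref{eq1}--\eqref{eq8} for the remaining bookkeeping.
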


\begin{proof}
The proof follows directly from (\ref{Yes}) and the lemma below.
\end{proof}

\begin{lem}\label{lema} For all $i, j$ such that $\vert i-j\vert =1$,
we have:

\noindent{ \bf ~\ref{lema}.1}
$
E_jT_i = T_iT_jE_iT_j  +(u-1)( T_iT_jE_iE_j + T_iE_iE_j)
$

\noindent{ \bf ~\ref{lema}.2 }
$
T_jE_jT_i = T_iT_jT_iE_iT_j  +(u-1)( T_iT_jT_iE_iE_j + T_jT_iE_iE_j).
$
\end{lem}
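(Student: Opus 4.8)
The plan is to prove \ref{lema}.1 by a direct computation from the defining relations, and then to deduce \ref{lema}.2 from \ref{lema}.1 by one left multiplication together with the braid relation (\ref{eq2}). Throughout, $i$ and $j$ satisfy $|i-j|=1$; no case distinction between $j=i+1$ and $j=i-1$ is needed, because every relation invoked below holds for all such pairs.

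For \ref{lema}.1, I would begin from (\ref{eq7}), that is $E_jT_iT_j=T_iT_jE_i$, and multiply on the right by $T_j$:
\[
E_jT_iT_j^2=T_iT_jE_iT_j .
\]
On the left-hand side I then replace $T_j^2$ by $1+(u^{-1}-1)E_j(1-T_j)$ using (\ref{eq9}) and expand; this turns $E_jT_iT_j^2$ into $E_jT_i$ plus a scalar multiple of $E_jT_iE_j$ plus a scalar multiple of $E_jT_iE_jT_j$. Solving the resulting identity for $E_jT_i$ then expresses it through $T_iT_jE_iT_j$ and the two ``correction'' words $E_jT_iE_j$ and $E_jT_iE_jT_j$.

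The substance of the argument is to collapse these correction words into the monomials occurring in the statement. Applying (\ref{eq8}) with the roles of $i$ and $j$ interchanged gives $E_jT_iE_j=T_iE_jE_i$, and then (\ref{eq4}) gives $E_jT_iE_j=T_iE_iE_j$; multiplying this on the right by $T_j$ and using (\ref{eq8}) again, in the form $E_iE_jT_j=T_jE_iE_j$, gives $E_jT_iE_jT_j=(T_iE_iE_j)T_j=T_i(E_iE_jT_j)=T_iT_jE_iE_j$. Substituting these two evaluations and collecting coefficients --- which are exactly the powers of $u$ produced by (\ref{eq9}) --- yields \ref{lema}.1. (Equivalently, one may invert $T_j$ from the start: (\ref{eq9}) can be rewritten $T_j\bigl(T_j+(u^{-1}-1)E_j\bigr)=1+(u^{-1}-1)E_j$, and $1+(u^{-1}-1)E_j$ is a unit with inverse $1+(u-1)E_j$ since $E_j^2=E_j$ by (\ref{eq3}); then $E_jT_i=T_iT_jE_iT_j^{-1}$ is expanded directly, using $E_jT_j=T_jE_j$ from (\ref{eq5}) together with (\ref{eq8}) to reduce the term $E_iT_jE_j$, and the computation closes in the same way.)

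Finally, \ref{lema}.2 needs almost nothing beyond \ref{lema}.1: multiplying \ref{lema}.1 on the left by $T_j$ gives
\[
T_jE_jT_i=T_jT_iT_jE_iT_j+(u-1)\bigl(T_jT_iT_jE_iE_j+T_jT_iE_iE_j\bigr),
\]
and applying the braid relation $T_jT_iT_j=T_iT_jT_i$ from (\ref{eq2}) to the first two summands on the right produces exactly \ref{lema}.2. I expect the only delicate part of the whole argument to be the bookkeeping: picking the correct instances of (\ref{eq8}) to bring $E_jT_iE_j$ and $E_jT_iE_jT_j$ into the desired form, and tracking the $u$-coefficients correctly through the quadratic relation (\ref{eq9}). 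There is no conceptual obstacle.
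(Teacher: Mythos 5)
Your proposal is correct and follows essentially the same route as the paper: right-multiply (\ref{eq7}) by $T_j$, expand $T_j^2$ via (\ref{eq9}), collapse the correction words $E_jT_iE_j$ and $E_jT_iE_jT_j$ into $T_iE_iE_j$ and $T_iT_jE_iE_j$ using (\ref{eq8}) and (\ref{eq4}), solve for $E_jT_i$, and obtain \ref{lema}.2 by left multiplication by $T_j$ together with (\ref{eq2}). The one caveat you defer to ``bookkeeping'' --- whether the coefficient is $(u^{-1}-1)$ or $(u-1)$ and the relative sign of the two correction terms --- is a discrepancy already present between the paper's stated lemma and its own computation, so your treatment is not a gap relative to the paper's proof.
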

\begin{proof}
 From (\ref{eq7}) we get
\begin{eqnarray*}
(T_iT_jE_i)T_j & = & E_jT_iT_j^2\\
                & = & E_jT_i (1 +  (u-1)(E_j - E_jT_j)) \qquad \mbox
        {\rm (from (\ref{eq9}))}\\
        & = & E_jT_i + (u-1)(T_iE_iE_j - T_iT_jE_iE_j)
        \qquad \mbox{\rm (from (\ref{eq8}))}.
\end{eqnarray*}
Thus the assertion (\ref{lema}.1) follows.

Multiplying  ~\ref{lema}.1  on the left by $T_j$, and   using
(\ref{eq2}) we get  ~\ref{lema}.2 .
\end{proof}
 
\section{A diagram representation}
In this section we define an isomorphism $\varphi$ from the algebra
${\mathcal E}_n(u)$ to the algebra ${\mathcal B}{\sf H}_n(u)$ of
special diagrams. The diagrams corresponding to the words of
${\mathcal E}_n(u)$
 look like  the standard diagrams of the elements of the braid group
$B_{n}$ (braids with $n$ strings), with a new structure between
adjacent strings.

\subsection{Word's representation}
We first introduce a one-to-one correspondence $\varphi$ between
the free algebra ${\mathcal A}$ over ${\Bbb C}(u)$ generated by
$1, T_i, E_i$
 and the free algebra ${\mathcal B}$ over ${\Bbb C}(u)$ generated by
diagrams-generators as follows.

\begin{defn}  The image by $\varphi$ in ${\mathcal B}$
of a generator ($1,T_i,E_i)$ of ${\mathcal A}$ is called {\em row}
 \end{defn}

Let $\varphi(1)$ be the row-diagram

\setlength\unitlength{0.4ex}
\begin{picture}(100,35)(-40,5)

\bgr{0.4ex}{100}{20}

\zerl{10}{10}
\zerl{20}{10}
\zerl{30}{10}
\zerl{50}{10}
\zero{60}{10}
\put(38,15){$\ldots$}
\scriptsize
\put(10,21){$1$}
\put(20,21){$2$}

\put(30,21){$3$}
\put(48,21){$n-2$}
\put(58,21){$n-1$}
\put(70,21){$n$}
\egr
\end{picture}

Let $\varphi(T_i)$ be the row-diagram

\begin{picture}(100,35)(-40,5)

\bgr{0.4ex}{100}{20}

\zero{10}{10}
\put(40,10)\ibraid

\zerl{70}{10}
\put(28,15){$\ldots$}
\put(58,15){$\ldots$}

\scriptsize

\put(10,21){$1$}
\put(20,21){$2$}
\put(40,21){$i$}
\put(48,21){$i+1$}
\put(70,21){$n$}
\egr
\end{picture}

Let $\varphi(E_i)$ be the row-diagram

\begin{picture}(100,35)(-40,5)

\bgr{0.4ex}{100}{20}

\zero{10}{10}
\ei{40}{10}

\zerl{70}{10}
\put(28,15){$\ldots$}
\put(58,15){$\ldots$}

\scriptsize
\put(10,21){$1$}
\put(20,21){$2$}
\put(40,21){$i$}
\put(48,21){$i+1$}
\put(70,21){$n$}
\egr
\end{picture}

The application $\varphi$ sends the multiplication $AB$ of two words $A$
and $B$
 (in particular of two generators) to the diagram
obtained putting the diagram $\varphi(B)$ below $\varphi(A)$.

\setlength\unitlength {.8 cm}
\begin{picture}(10,7)(-5.5,.5)


\put (1,1){\line(0,1){.5}}
\put (2,1){\line(0,1){.5}}
\put (3,1){\line(0,1){.5}}
\put (5,1){\line(0,1){.5}}
\put (6,1){\line(0,1){.5}}
\put (7,1){\line(0,1){.5}}

\put(3.8,1.25){$\ldots$}
\put(.5,1.5){\dashbox{.1}(7,2){B}}

\put (1,3.5){\line(0,1){.5}}
\put (2,3.5){\line(0,1){.5}}
\put (3,3.5){\line(0,1){.5}}
\put (5,3.5){\line(0,1){.5}}
\put (6,3.5){\line(0,1){.5}}
\put (7,3.5){\line(0,1){.5}}

 \put(3.8,3.75){$\ldots$}
\put(.5,4){\dashbox{.1}(7,2){A}}
\put (1,6){\line(0,1){.5}}
\put (2,6){\line(0,1){.5}}
\put (3,6){\line(0,1){.5}}
\put (5,6){\line(0,1){.5}}
\put (6,6){\line(0,1){.5}}
\put (7,6){\line(0,1){.5}}

\put(3.8,6.25){$\ldots$}

\scriptsize
\put(.9,6.6){$1$}
\put(1.9,6.6){$2$}

\put(2.9,6.6){$3$}
\put(4.6,6.6){$n-2$}
\put(5.6,6.6){$n-1$}
\put(6.9,6.6){$n$}

\end{picture}

A linear combination of words with coefficients in ${\mathbb C}(u)$
is sent by $\varphi$ in the same formal combinations of diagrams corresponding
to the words with the same coefficients.

\begin{prop}\label{corres}
The above construction defines a one to one correspondence between
 ${\mathcal A}$ and  ${\mathcal B}$.
 \end{prop}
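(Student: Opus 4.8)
The plan is to identify the monomial bases of the two free algebras and to check that $\varphi$ matches them up bijectively. Since $\mathcal{A}$ is the free associative $\mathbb{C}(u)$-algebra on the symbols $1, T_1,\dots,T_{n-1},E_1,\dots,E_{n-1}$, a $\mathbb{C}(u)$-basis of $\mathcal{A}$ is the set $W$ of all words (finite sequences) in these symbols, with concatenation as product. Likewise $\mathcal{B}$ is free on the set $\mathcal{R}$ of row-diagrams, so a $\mathbb{C}(u)$-basis of $\mathcal{B}$ is the set $D$ of all finite vertical stacks of rows, with stacking as product. Because $\varphi$ is defined to act $\mathbb{C}(u)$-linearly and to send a linear combination of words to the same linear combination of the corresponding diagrams, it suffices to show that $\varphi$ restricts to a bijection $W\to D$; this then immediately gives that $\varphi$ is a bijection (in fact a linear isomorphism of the underlying vector spaces) $\mathcal{A}\to\mathcal{B}$.

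First I would observe that $\varphi$ is a bijection from the generating set $\{1\}\cup\{T_i\}\cup\{E_i\}$ onto $\mathcal{R}$: the three kinds of row-pictures --- the trivial row $\varphi(1)$, the crossing $\varphi(T_i)$, and the tie $\varphi(E_i)$ --- are pairwise distinct as diagrams, and by the definition of \emph{row} the set $\mathcal{R}$ consists of exactly these images. By construction $\varphi$ sends a word $g_{j_1}g_{j_2}\cdots g_{j_k}$ to the diagram obtained by placing $\varphi(g_{j_1}),\dots,\varphi(g_{j_k})$ in $k$ consecutive horizontal strips, read from top to bottom; thus the restriction $\varphi\colon W\to D$ is precisely the map of free monoids induced by the above bijection of generators, and in particular it carries concatenation of words to stacking of diagrams.

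The only point requiring an argument is that this monoid map is injective and surjective onto $D$, i.e.\ that every stacked diagram decomposes into rows in a \emph{unique} way. Surjectivity is immediate, as $D$ was defined to be the set of such stacks. For injectivity one uses that every row is drawn inside a horizontal strip of one fixed height: a stack of $k$ rows is then a diagram whose total height is $k$ times that unit, so the number $k$ of factors is determined by the diagram, and slicing along the $k-1$ interior horizontal lines recovers the ordered list of strips; each strip is one of the finitely many generator-pictures and is uniquely recognized (by whether it is trivial, a crossing, or a tie, and by which pair of adjacent strands it involves). Hence two words with the same image under $\varphi$ are the same sequence of rows, so they are equal as words. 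This ``strip-by-strip readability'' of a diagram is the whole content of the proposition; note that, since we are working in the \emph{free} algebra $\mathcal{B}$, no diagram is ever simplified, so e.g.\ $\varphi(T_iT_i)$ is a genuine two-strip diagram distinct from $\varphi(T_i)$ and from $\varphi(1)$.

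Putting the pieces together: $\varphi$ is a bijection of generating sets, it intertwines concatenation with stacking, the induced map $W\to D$ on basis elements is a bijection by the previous paragraph, and $\varphi$ was defined $\mathbb{C}(u)$-linearly on the spans of these bases; therefore $\varphi\colon\mathcal{A}\to\mathcal{B}$ is the desired one-to-one correspondence. I expect the bookkeeping in the strip-decomposition step to be the only place where any care is needed; everything else is a formal consequence of the freeness of $\mathcal{A}$ and of $\mathcal{B}$.
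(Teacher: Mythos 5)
Your proposal is correct and follows essentially the same route as the paper: the paper's proof also exhibits the inverse map by reading a diagram strip-by-strip from top to bottom and writing down $\varphi^{-1}$ of each encountered row, which is exactly your unique strip-decomposition argument. You simply spell out the freeness and linearity bookkeeping in more detail than the paper does.
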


\begin{proof}
It  suffices to see that the inverse map associates to every
diagram one and only one word. In fact, this word is obtained
reading the diagram from top to bottom, and writing in the order
the $\varphi^{-1}$ of the encountered  rows.
\end{proof}

\subsection{Definition of the algebra ${\mathcal B}{\sf H}_n(u)$. }
We  translate  into the diagram language the set
of relations ~\ref{eq1} to ~\ref{eq9}. Adding these relations to the free
algebra ${\mathcal B}$ we obtain a new algebra ${\mathcal B}{\sf H}_n(u)$.

Note that the first eight relations
involve only single words.
 Because of Proposition ~\ref{corres} we will use the same symbols for
elements of
${\mathcal E}_n(u)$  and their images ${\mathcal B}{\sf H}_n(u)$.

The following relations between  $T_i$ are the standard
relations of the braid groups.
\begin{equation}\label{re1}
T_jT_i=T_iT_j,  \quad {\rm  if} \quad  \vert i-j\vert >1.
\end{equation}

\setlength\unitlength{0.4ex}
\begin{picture}(100,40)(-10,5)

\bgr{0.4ex}{10}{23}

\put(10,5){\ibraid}
\put(40,5){\twist{0}{10}}
\put(50,5){\twist{0}{10}}
\put(27,15){$.\ .\ .$}
\put(10,15){\twist{0}{10}}
\put(20,15){\twist{0}{10}}
\put(40,15){\ibraid}

\put(60,15){$\sim$}

\put(70,15){\ibraid}
\put(100,15){\twist{0}{10}}
\put(110,15){\twist{0}{10}}
\put(87,15){$.\ .\ .$}
\put(70,5){\twist{0}{10}}
\put(80,5){\twist{0}{10}}
\put(100,5){\ibraid}

 \scriptsize
\put(10,26){$i$}
\put(18,26){$i+1$}
\put(40,26){$j$}
\put(48,26){$j+1$}
\put(70,26){$i$}
\put(78,26){$i+1$}
\put(100,26){$j$}
\put(108,26){$j+1$}
 \egr

\end{picture}

\begin{equation}\label{re2}
T_iT_jT_i=T_jT_iT_j, \quad {\rm  if} \quad  \vert i-j\vert =1.
\end{equation}

\begin{picture}(100,60)(-40,5)

\bgr{0.4ex}{100}{38}

\put(10,10){\ibraid}
\put(30,10){\twist{0}{10}}
\put(10,20){\twist{0}{10}}
\put(20,20){\ibraid}
\put(30,30){\twist{0}{10}}
\put(10,30){\ibraid}

\put(40,25){$\sim$}

\put(60,10){\ibraid}
\put(50,10){\twist{0}{10}}
\put(70,20){\twist{0}{10}}
\put(50,20){\ibraid}
\put(50,30){\twist{0}{10}}
\put(60,30){\ibraid}

\scriptsize
\put(10,41){$i$}
\put(18,41){$i+1$}
\put(28,41){$i+2$}
\put(50,41){$i$}
\put(58,41){$i+1$}
\put(68,41){$i+2$}
\egr
\end{picture}

The $E_i$ generators commute one another, and any (natural) power of $E_i$
coincides with $E_i$.
\begin{equation}\label{re3}
E_i^2=E_i.
\end{equation}

\begin{picture}(100,25)(-55,5)

\bgr{0.4ex}{10}{18}

\ei{10}{0}

\ei {10}{10}

\put(28,10){$\sim$}

\ei {40}{5}
\zero{40}{10}
\zero{40}{0}

 \egr
\end{picture}

\begin{equation}\label{re4}
E_jE_i=E_iE_j,  \quad \forall   i,j.
\end{equation}

\begin{picture}(100,40)(-10,5)
\bgr{0.4ex}{10}{23}

\ei{10}{5}
\zero{40}{5}
\put(27,15){$.\ .\ .$}
\zero{10}{15}

\ei {40}{15}

\put(60,15){$\sim$}

\ei {70}{15}
\zero{100}{15}

\put(87,15){$.\ .\ .$}
\zero{70}{5}
\ei {100}{5}

\scriptsize
\put(10,26){$i$}
\put(18,26){$i+1$}
\put(40,26){$j$}
\put(48,26){$j+1$}
\put(70,26){$i$}
\put(78,26){$i+1$}
\put(100,26){$j$}
\put(108,26){$j+1$}
\egr
\end{picture}

Relations involving $E_i$'s  and $T_i$'s generators:
\begin{equation}\label{re5}
 E_iT_i=T_iE_i.
\end{equation}

\begin{picture}(100,40)(-55,5)
\bgr{0.4ex}{10}{18}

\ei{10}{5}

\put(10,15){\ibraid}

\put(28,15){$\sim$}

\put(40,5){\ibraid}

\ei{40}{15}

 \egr
\end{picture}

\begin{equation}\label{re6}
E_jT_i=T_iE_j,  \quad {\rm  if} \quad \vert i-j\vert >1.
\end{equation}


\begin{picture}(100,45)(-5,0)

\bgr{0.4ex}{10}{23}

\put(10,5){\ibraid}
\zero{10}{15}

\put(27,15){$.\ .\ .$}
\zero{40}{5}
\ei {40}{15}

\put(60,15){$\sim$}

\put(70,15){\ibraid}
\zero{100}{15}

\put(87,20){$.\ .\ .$}
\zero{70}{5}
\ei {100}{5}

\scriptsize
\put(10,26){$i$}
\put(18,26){$i+1$}
\put(40,26){$j$}
\put(48,26){$j+1$}
\put(70,26){$i$}
\put(78,26){$i+1$}
\put(100,26){$j$}
\put(108,26){$j+1$}
\egr
\end{picture}

\begin{equation}\label{re7}
 E_jT_iT_j=T_iT_jE_i,  \quad {\rm  if} \quad  \vert i-j\vert =1.
\end{equation}

\begin{picture}(100,45)(-40,5)
\bgr{0.4ex}{10}{30}

\ei{20}{25}
\zerl{10}{25}
\put(10,15){\ibraid}
\zerr{20}{15}
\put(20,5){\ibraid}
\zerl{10}{5}

\put(38,20){$\sim$}

\ei{50}{5}
\zerr{60}{5}
\put(60,15){\ibraid}
\zerl{50}{15}
\put(50,25){\ibraid}
\zerr{60}{25}

 \egr
 \end{picture}

\begin{equation}\label{re8}
E_iE_jT_j=E_iT_jE_i=T_jE_iE_j,  \quad {\rm  if} \quad  \vert i-j \vert =1.
\end{equation}

\begin{picture}(100,44)(-30,5)
\bgr{0.4ex}{10}{30}

\ei{0}{25}
\ei{10}{15}
\zerl{0}{15}
\put(10,5){\ibraid}
\zerl{0}{5}
\zero{10}{25}



\put(29,20){$\sim$}

\ei{40}{25}
\zero{50}{25}
\zerl{40}{15}
\put(50,15){\ibraid}
\zerr{50}{5}
\ei{40}{5}



\put(69,20){$\sim$}

\zerl{80}{25}
\zero{80}{5}
\zero{90}{15}
\put(90,25){\ibraid}
\ei{80}{15}
\ei{90}{5}

 \egr
\end{picture}

The last relation, corresponding to (\ref{eq9}), is of type \lq\lq Skein
rule\rq\rq. By this relation a single diagram is equivalent to a
linear combination of diagrams. By eq. (\ref{eq9})  and  relation $T_iT_i^{-1}=1$ one obtains the following  expression  for
by $T_i^{-1}$:
\begin{equation}\label{re9}
 T_i^{-1}=T_i + (u-1)E_iT_i+(1-u)E_i.
\end{equation}

Hence a diagram containing an inverse generator $T_i^{-1}$ is equivalent to the
following combination of diagrams containing $T_i$, $E_iT_i$
and $E_i$ in place of $T_i^{-1}$:

\begin{picture}(100,40)(-15,5)
\bgr{0.4ex}{10}{23}

\put(10,10){\braid}
\put(24,15){$\sim$}
\put(30,10){\ibraid}

\put(42,15){$+\quad (u-1)$}

\put(65,5){\ibraid}
\ei{65}{15}
\put(77,15){$+\quad (1-u)$}

\ei{100}{10}

 \egr
\end{picture}

\begin{prop}
The diagram relations ~\ref{re1} to ~\ref{re9} define the algebra
${\mathcal B}{\sf H}_n(u)$. This algebra is naturally isomorphic
to ${\mathcal E}_n(u)$ by construction.
 \end{prop}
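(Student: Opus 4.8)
The plan is to exhibit the isomorphism ${\mathcal E}_n(u)\cong{\mathcal B}{\sf H}_n(u)$ as an immediate consequence of the way both algebras have been constructed as quotients of free algebras by corresponding sets of relations, so the real content is to check that $\varphi$ descends to the quotients and remains bijective. Concretely, recall that Proposition~\ref{corres} gives a bijection $\varphi\colon{\mathcal A}\to{\mathcal B}$ between the free algebra ${\mathcal A}$ on $1,T_i,E_i$ and the free algebra ${\mathcal B}$ on the corresponding row-diagrams, and that $\varphi$ is manifestly an algebra homomorphism because by definition it sends a product of words to the vertical stacking of their diagrams and is ${\Bbb C}(u)$-linear. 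Let $I\subset{\mathcal A}$ be the two-sided ideal generated by the relators coming from \eqref{eq1}--\eqref{eq9}, so that ${\mathcal E}_n(u)={\mathcal A}/I$, and let $J\subset{\mathcal B}$ be the two-sided ideal generated by the relators coming from \eqref{re1}--\eqref{re9}, so that ${\mathcal B}{\sf H}_n(u)={\mathcal B}/J$ by the very definition of ${\mathcal B}{\sf H}_n(u)$.

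First I would observe that $\varphi$ carries the generating relators of $I$ exactly onto the generating relators of $J$: relation~\eqref{eq$k$} translated diagram-by-diagram is precisely the pictorial identity labelled \eqref{re$k$} for $k=1,\dots,8$, and \eqref{eq9} translates to the skein identity \eqref{re9} (rewritten via $T_iT_i^{-1}=1$ as in the text). This is the step where one simply inspects each of the nine families of pictures drawn in Section~3 and matches them against \eqref{eq1}--\eqref{eq9}; it is routine but is the substantive verification. Since $\varphi$ is an algebra isomorphism ${\mathcal A}\to{\mathcal B}$ taking a generating set of the ideal $I$ onto a generating set of the ideal $J$, it takes $I$ onto $J$, hence induces an isomorphism of quotient algebras $\bar\varphi\colon{\mathcal A}/I\to{\mathcal B}/J$, i.e. ${\mathcal E}_n(u)\xrightarrow{\ \sim\ }{\mathcal B}{\sf H}_n(u)$. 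The inverse is induced by $\varphi^{-1}$, which on diagrams is the top-to-bottom reading map of Proposition~\ref{corres}.

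I would also record the small point that ${\mathcal B}{\sf H}_n(u)$ is \emph{defined} to be ${\mathcal B}/J$, so the clause ``the diagram relations \eqref{re1}--\eqref{re9} define the algebra ${\mathcal B}{\sf H}_n(u)$'' is a definition being made explicit rather than something to be proved; the only assertion with content is the ``naturally isomorphic'' clause, which is exactly $\bar\varphi$ above, natural in the sense that it is the descent of the generator-level correspondence $1\mapsto\varphi(1)$, $T_i\mapsto\varphi(T_i)$, $E_i\mapsto\varphi(E_i)$ already fixed in Section~3.1.

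The main obstacle is purely one of bookkeeping in the matching step: one must be careful that each diagrammatic relation \eqref{re$k$} really is the image of \eqref{eq$k$} and of nothing more — in particular that no extra diagrammatic identities have been silently imposed (e.g. through the conventions for stacking rows, the dashed ``tie'' boxes in \texttt{\textbackslash ei}, or the isotopy implicitly allowed in reading \eqref{re7}--\eqref{re8}) and none omitted. Once one is satisfied that the relator sets correspond bijectively under the free-algebra isomorphism $\varphi$, the conclusion is formal. I do not anticipate any genuine difficulty beyond this careful comparison of the two presentations.
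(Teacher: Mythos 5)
Your argument is correct and is essentially what the paper intends: the paper gives no proof at all, asserting the isomorphism holds ``by construction,'' and your quotient-of-free-algebras argument (the bijection $\varphi$ of Proposition~\ref{corres} is an algebra isomorphism carrying the relator set of \eqref{eq1}--\eqref{eq9} onto that of \eqref{re1}--\eqref{re9}, hence descends to the quotients) simply makes that construction explicit. Your parenthetical handling of \eqref{re9} as the rewriting of \eqref{eq9} via $T_iT_i^{-1}=1$ matches the paper's own treatment, so there is nothing to add.
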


\section{Diagram's reduction}
The representation of the elements of an abstract algebra by some
geometrical objects (which may be taken as  models of some physical objects) is
interesting if the equivalence relations of the algebra become, in
terms of these objects, natural moves which leave unaltered some
property of them. Thus the geometrical intuition can help to find
immediately equivalence relations between objects.

In this section an interpretation of the element $E_i$ is given so
that the allowed moves involving $E_i$'s relate  equivalent objects
with respect to the algebra.

\subsection{Useful relations} We need some relations coming from
 ~\ref{eq1} to ~\ref{eq9}.
\begin{prop} \label{tim}
The relations ~\ref{eq5} to ~\ref{eq8} are true substituting $T_i$ for
$T_i^{-1}$.
\end{prop}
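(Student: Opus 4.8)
The plan is to verify the four relations (\ref{eq5})--(\ref{eq8}) separately, using only the defining relations (\ref{eq1})--(\ref{eq9}), the invertibility of $T_i$, and the explicit formula $T_i^{-1}=T_i+(u-1)E_iT_i+(1-u)E_i$ recorded in (\ref{re9}). Relations (\ref{eq5}) and (\ref{eq6}) merely say that $E_i$ commutes with $T_i$, respectively with $T_j$ for $|i-j|>1$; since $ab=ba$ with $b$ invertible forces $ab^{-1}=b^{-1}a$ (multiply $ab=ba$ on both sides by $b^{-1}$), the substitution $T_k\mapsto T_k^{-1}$ leaves these valid at once.

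For (\ref{eq7}) I would start from the instance of (\ref{eq7}) with $i$ and $j$ interchanged --- legitimate because the hypothesis $|i-j|=1$ is symmetric --- namely $E_iT_jT_i=T_jT_iE_j$. Multiplying on the left by $T_i^{-1}T_j^{-1}$ gives $T_i^{-1}T_j^{-1}E_iT_jT_i=E_j$, and then multiplying the result on the right by $T_i^{-1}T_j^{-1}$ and cancelling $T_jT_iT_i^{-1}T_j^{-1}=1$ yields $T_i^{-1}T_j^{-1}E_i=E_jT_i^{-1}T_j^{-1}$, which is exactly (\ref{eq7}) with every $T_k$ replaced by $T_k^{-1}$.

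For (\ref{eq8}) I would first observe that (\ref{eq8}) itself gives $T_jE_iE_j=E_iE_jT_j$, so $T_j$ commutes with $E_iE_j$ and hence so does $T_j^{-1}$; this already yields $E_iE_jT_j^{-1}=T_j^{-1}E_iE_j$. For the remaining equality I would substitute (\ref{re9}) into both $E_iE_jT_j^{-1}$ and $E_iT_j^{-1}E_i$ and expand: every monomial that appears can be simplified, using idempotency (\ref{eq3}) and commutativity (\ref{eq4}) of the $E$'s, the commutation (\ref{eq5}) of $E_j$ with $T_j$, and (\ref{eq8}) in the forms $E_iT_jE_i=E_iE_jT_j$ and $E_iE_jT_j=T_jE_iE_j$, so that both expressions collapse to the common value $uE_iE_jT_j+(1-u)E_iE_j$, giving all three equalities.

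The only slightly delicate point is the expansion for (\ref{eq8}): one must keep track of the several monomials produced by (\ref{re9}) and check that each reduction step used --- for instance $E_iE_jT_jE_i=E_iT_jE_i$ and $E_iE_jE_i=E_iE_j$ --- follows from the relations already available and is not a disguised use of the statement being proved. The cases (\ref{eq5}), (\ref{eq6}) are immediate and (\ref{eq7}) is a one-line cancellation once the $i\leftrightarrow j$ symmetry is exploited, so (\ref{eq8}) is where essentially all the work lies.
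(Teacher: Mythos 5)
Your proof is correct, and it is worth noting that it only partly coincides with the paper's own argument, which is a one-line uniform brute-force check: substitute the expression $T_i^{-1}=T_i+(u-1)E_iT_i+(1-u)E_i$ of (\ref{re9}) into each of (\ref{eq5})--(\ref{eq8}) and verify that the resulting identities follow from (\ref{eq3})--(\ref{eq8}). You instead exploit invertibility structurally wherever possible: for (\ref{eq5}) and (\ref{eq6}) commutation with an invertible element passes to its inverse; for (\ref{eq7}) you take the instance with $i$ and $j$ interchanged (legitimate, since the hypothesis $|i-j|=1$ is symmetric) and cancel, obtaining $E_jT_i^{-1}T_j^{-1}=T_i^{-1}T_j^{-1}E_i$; for (\ref{eq8}) the outer equality is again a commutation argument, and only the middle term forces you into the paper's expansion via (\ref{re9}), where your reductions $E_iT_jE_i=E_iE_jT_j$, $E_iE_jT_jE_i=E_iT_jE_i$, $E_iE_jE_i=E_iE_j$ are honest consequences of (\ref{eq3}), (\ref{eq4}), (\ref{eq8}), and all three expressions do collapse to the common value $uE_iE_jT_j+(1-u)E_iE_j$. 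Your route buys economy and transparency (it isolates the single place where the explicit inverse formula is really needed), while the paper's route buys uniformity and produces exactly the computations one reuses for the diagrammatic skein rule. One small caveat: you prove the version in which every braid generator occurring in the relation is inverted, which is the natural reading of the proposition; the paper later also uses ``mixed'' variants such as $E_jT_i^{-1}T_j=T_i^{-1}T_jE_i$, and those do not follow from your statement alone but require the subsequent identities (\ref{re13})--(\ref{re14}) or a short extra computation.
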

\begin{proof}
If we substitute  $T_i^{-1}$ into relations ~\ref{eq5} to ~\ref{eq8} given
by relation ~\ref{re9} we find identities by means of equations 
~\ref{eq3} to ~\ref{eq8}.
\end{proof}
Multiplying the terms of equations ~\ref{eq7} to  the left by $T_i^{-1}$ and
 to the right by $T_j^{-1}$ we obtain
\begin{equation}\label{re13}
T_i^{-1}E_jT_i=T_j E_i T_j^{-1}.
\end{equation}

Now, we have
\begin{eqnarray*}
T_i^{-1}E_j T_i & = & T_i^{-1}E_j T_iT_i^{-1}E_j T_i\\
                & = & T_i^{-1}E_j T_iT_j E_i T_j^{-1}\qquad (\mbox{from}\quad ~\ref{re13}) \\
        & = & T_i^{-1}  T_iT_jT_j^{-1}E_iE_iT_j^{-1} \qquad (\mbox{from}\quad ~\ref{eq7}) .
\end{eqnarray*}
Then
\begin{equation}\label{re14}
T_i^{-1}E_j T_i = T_j^{-1}E_i  T_j,
\end{equation}
or equivalently
\begin{equation}\label{re14A}
u(T_iE_jT_i - T_jE_iT_j) = (u-1)(E_jT_iE_j - E_iT_jE_i).
\end{equation}

Finally we consider   relation ~\ref{eq2}. We multiply it by $T_i^{-1}E_j T_i$
and by the preceding relation we obtain
$$
T_iT_jT_iT_i^{-1}E_j T_i=T_jT_iT_jT_j^{-1}E_i T_j,
$$
i.e.
\begin{equation}\label{re15}
T_iT_jE_j T_i=T_jT_iE_i T_j.
\end{equation}
\subsection{$E_i$'s as ties}
The  element $E_i$, represented in the diagrams by a dashed line
between the strings $i$ and $i+1$, can be interpreted as a rigid
bar free to move up and down between two adjacent strings as far
as they remain at the same distance. This is the meaning of relations
 ~\ref{re4}, ~\ref{re6}, ~\ref{re7} and  ~\ref{re5},
if we interpret element $T_i$ as a twist in
the 3-space.
The non commutativity of $E_i$ with $T_j$ when $\vert i-j\vert =1$ is thus
interpreted as the obstacle to movements of the bar.

Consider now the following diagram relations whose validity we showed in
the preceding subsection:
The first one

\begin{equation}
 E_jT_i^{-1}T_j=T_i^{-1}T_jE_i,   \quad {\rm  if} \quad  \vert i-j \vert =1.
\end{equation}

\begin{picture}(100,45)(-30,5)
\bgr{0.4ex}{10}{30}

\ei{20}{25}
\zerl{10}{25}
\put(10,15){\braid}
\zerr{20}{15}
\put(20,5){\ibraid}
\zerl{10}{5}

\put(38,20){$\sim$}

\ei{50}{5}
\zerr{60}{5}
\put(60,15){\ibraid}
\zerl{50}{15}
\put(50,25){\braid}
\zerr{60}{25}

 \egr
 \end{picture}

\noindent shows that the bar can slide passing through the strings.
The following relations still indicate  that in a rigid horizontal move of the
bar, its bypassing of a string is allowed. Note that the corresponding moves
generalize combinations of  Reidemeister moves of second type.
\begin{equation}
 T_i^{-1}E_jT_i=T_iE_jT_i^{-1}= T_j^{-1}E_iT_j=T_jE_iT_j^{-1},  \qquad  (\vert i-j \vert =1).
\end{equation}

\begin{picture}(100,40)(-10,5)
\bgr{0.4ex}{10}{26}

\put(0,5){\ibraid}
\zerr{10}{5}

\pzero{0}{15}
\pei {10}{15}

\put(0,17){\braid}
\zerr{10}{17}

\put(25,16){$\sim$}

\put(33,5){\braid}
\zerr{43}{5}

\pzero{33}{15}
\pei {43}{15}

\put(33,17){\ibraid}
\zerr{43}{17}

\put(58,16){$\sim$}

\put(76,5){\ibraid}
\zerl{66}{5}

\pzero{76}{15}
\pei {66}{15}

\put(76,17){\braid}
\zerl{66}{17}

\put(92,16){$\sim$}

\put(110,5){\braid}
\zerl{100}{5}

\pzero{110}{15}
\pei {100}{15}

\put(110,17){\ibraid}
\zerl{100}{17}

 \egr
\end{picture}

Finally,  equation ~\ref{re15}
\begin{equation}
T_iT_jE_j T_i=T_jT_iE_i T_j,  \qquad (\vert i-j\vert =1).
\end{equation}

\begin{picture}(100,60)(-30,5)
\bgr{0.4ex}{10}{40}

\put(10,10){\ibraid}
\put(30,10){\twist{0}{10}}
\put(10,20){\twist{0}{10}}
\put(20,20){\ibraid}
\pei{20}{30}
\pzerl{10}{30}
\put(30,32){\twist{0}{10}}
\put(10,32){\ibraid}

\put(40,25){$\sim$}

\put(60,10){\ibraid}
\put(50,10){\twist{0}{10}}
\put(70,20){\twist{0}{10}}
\put(50,20){\ibraid}

\pei{50}{30}
\pzerr{60}{30}
\put(50,32){\twist{0}{10}}
\put(60,32){\ibraid}

 \egr
\end{picture}

\noindent shows that also in the generalized third Reidemeister move the bar is allowed
to bypass the string.

We remark that relation ~\ref{re8} is the only exception to the rule
of sliding of the bar representing an $E_i$. We can interpret  this
relation as a sort of cooperation between two adjacent bars:
$E_i$ do not commute with $T_{i+1}$, but commutes with
$E_{i+1}T_{i+1}$.

\subsection{Equivalent relations of Skein type}
To conclude, we add some useful relations equivalent to
 ~\ref{eq9}.

Multiplying relation ~\ref{eq9}  by  $E_i$  we get the following
relations:
\begin{equation}
u E_iT_i -E_iT_i^{-1}=(u-1)E_i.
\end{equation}

\begin{picture}(100,20)(-20,15)
\bgr{0.4ex}{10}{15}

\put(5,15){$u$}
\put(10,10){\ibraid}
\pei{10}{20}
\put(24,15){$-$}
\put(30,10){\braid}
\pei{30}{20}

\put(50,15){$\sim \;\;\;\;\;(u-1)$}

\zero {75}{10}
\pei{75}{20}

 \egr
 \end{picture}

\begin{equation}
  E_iT_i - E_iT_i^{-1}=T_i - T_i^{-1}.
\end{equation}

\begin{picture}(100,30)(-20,5)
\bgr{0.4ex}{10}{15}

\put(10,10){\ibraid}
\pei{10}{20}
\put(24,15){$-$}
\put(30,10){\braid}
\pei{30}{20}

\put(48,15){$\sim$}

\put(60,10){\ibraid}

\put(74,15){$-$}
\put(80,10){\braid}

 \egr
\end{picture}

\section{Representation theory of
${\mathcal E}_n(1)$}
In this section we study the representation theory of
${\mathcal E} := {\mathcal E}_n(1)$.

We will use the standard definition of partition, and as usual we will
regard the partition as Young diagram. Also we will denote again by
$\alpha$ the irreducible
representation of ${\mathcal S}_n$ associated to a partition
$\alpha$ of $n$.

The symmetric group ${\mathcal S}_n$ is regarded as a Coxeter group generated
by the set of  elemental   transpositions $S$, that is, $S=\{s_1, \ldots ,
s_{n-1}\}$, $s_i=(i, i+1)$.

We are going to show a series of irreducible representations of ${\mathcal E}$
that come from the hyperoctahedral group $W_n$.  In other words $W_n$ is
the wreath product $C_2\wr {\mathcal S}_n$, where $C_2$ is the group with
two elements, say, $C_2=\{1,t\}$. Thus, $W_n$ has a  presentation defined by
the following Dynkin diagram
\begin{center}
\setlength\unitlength{0.2ex}
\begin{picture}(350,40)
\put(82,20){$t$}
\put(120,20){$s _{1}$}
\put(200,20){$s_{n-2}$}
\put(240,20){$s_{n-1}$}

\put(85,10){\circle{5}}
\put(87.5,11){\line(1,0){35}}
\put(87.5,9){\line(1,0){35}}
\put(125,10){\circle{5}}
\put(127.5,10){\line(1,0){10}}

\put(145,10){\circle*{2}}
\put(165,10){\circle*{2}}
\put(185,10){\circle*{2}}

\put(205,10){\circle{5}}
\put(207.5,10){\line(1,0){35}}
\put(245,10){\circle{5}}
\put(192.5,10){\line(1,0){10}}

\put(100,7){$<$}

\end{picture}
\end{center}

We have $W_n=C\rtimes {\mathcal S}_n$, where $C = C_2\times \cdots
\times C_2$ ($n$-times).

We define the elements $t_i\in C$ inductively as $t_1 = t$,
$t_{i+1} =s_i t_is_i$.

We shall represent $W_n$ as the subgroup of the monomials matrices of
$GL_n$, whose  non-zero entries  are 1 or $t$. Then $C$ is the
diagonal subgroup of $GL$ with 1's and $t$'s on the diagonal; and
$t_i$ is the diagonal matrix with $t$ in the position $(i, i)$,
and 1 otherwise.

Let $a, b \geq 0$ such that $a + b= n$. Let $W_{(a,b)}$ be the subgroup of
$W_n$ isomorphic to the group $W_a \times W_b$, according to the diagonal
embedding $GL_a \times GL_b \subseteq GL_n$.

In order to describe the representation theory of $W_n$, we consider
 the following linear character, $\epsilon :W_n \longrightarrow\{-1, 1\}$,
defined by $\epsilon(t)= -1$, $\epsilon (s_i)= 1$. We consider,
using the natural
surjection $\pi$ of $W_m$ onto ${\mathcal S}_m$ and pull-back
mechanism, a representation of ${\mathcal S}_m$  as a representation of
$W_m$.

\begin{thm} (Specht, see \cite{gp})
The types of irreducible representations of $W_n$ are para\-me\-tri\-zed
by the bipartitions of $n$. Moreover, all irreducible representations of $W_n$
can be underlying like an induced representation of the form
$$
V_{(\alpha, \beta)}={\mbox Ind}_{W_{(a,b)}}^{W_n}\alpha \otimes \epsilon\beta,
$$
where $\alpha$ is a partition of $a$, and $\beta$ is a partition of $b$,
$a + b =n$.
\end{thm}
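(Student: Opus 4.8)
The plan is to recall the classical construction of the irreducible representations of the wreath product $W_n = C_2 \wr {\mathcal S}_n$ à la Specht–Clifford, and to verify that the listed induced modules $V_{(\alpha,\beta)}$ exhaust the irreducible characters by a counting argument. First I would set up Clifford theory for the extension $1 \to C \to W_n \to {\mathcal S}_n \to 1$, where $C = C_2^{\times n}$ is the abelian normal subgroup. The characters of $C$ are indexed by functions $\{1,\dots,n\} \to \widehat{C_2} = \{\pm 1\}$, equivalently by subsets $J \subseteq \{1,\dots,n\}$ (those coordinates carrying the sign character $\epsilon$ of $C_2$); the group ${\mathcal S}_n$ acts on $\widehat{C}$ by permuting coordinates, so the ${\mathcal S}_n$-orbits on $\widehat{C}$ are parametrized by the cardinality $b = |J|$, with $0 \le b \le n$. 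Fix the orbit representative $\lambda_b$ whose stabilizer in ${\mathcal S}_n$ is ${\mathcal S}_a \times {\mathcal S}_b$ (with $a = n-b$); then the stabilizer of $\lambda_b$ in $W_n$ is precisely $W_{(a,b)} \cong W_a \times W_b$.

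Next I would extend the character $\lambda_b$ of $C$ to $W_{(a,b)}$: on the $W_a$ factor it is the trivial character pulled back through $\pi$, and on the $W_b$ factor it is the character $\epsilon$. Since $W_a \times W_b$ acts on this extension, Clifford theory (the standard "little groups" / Wigner–Mackey machine for split extensions with abelian kernel) tells us that the irreducibles of $W_n$ lying over the orbit of $\lambda_b$ are exactly
$$
{\rm Ind}_{W_{(a,b)}}^{W_n}\bigl( \widetilde{\lambda_b} \otimes \rho \bigr),
$$
where $\rho$ runs over the irreducibles of the inertia quotient, which here is ${\mathcal S}_a \times {\mathcal S}_b$, hence $\rho = \alpha \boxtimes \beta$ for a partition $\alpha \vdash a$ and $\beta \vdash b$. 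Pulling $\alpha$ back to $W_a$ and twisting $\beta$ by $\epsilon$ to get $\epsilon\beta$ on $W_b$, this is exactly the module $V_{(\alpha,\beta)} = {\rm Ind}_{W_{(a,b)}}^{W_n} \alpha \otimes \epsilon\beta$ in the statement. Clifford theory simultaneously gives irreducibility of each $V_{(\alpha,\beta)}$ and the fact that distinct pairs $(\alpha,\beta)$ (with $|\alpha| + |\beta| = n$) give inequivalent modules, and that every irreducible of $W_n$ arises this way.

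Finally I would record the bijection with bipartitions and, if a self-contained check is wanted, verify the dimension/count identity: the number of bipartitions of $n$ is $\sum_{a+b=n} p(a)p(b)$, which must equal the number of conjugacy classes of $W_n$; and $\sum_{(\alpha,\beta)} (\dim V_{(\alpha,\beta)})^2 = |W_n| = 2^n n!$ follows from $\dim V_{(\alpha,\beta)} = \binom{n}{b} f^\alpha f^\beta$ together with the Vandermonde-type identity $\sum \binom{n}{b}^2 (\dim)^2$ summing to $2^n n!$ via the classical hook-length square sums $\sum_{\alpha \vdash a}(f^\alpha)^2 = a!$. The main obstacle, and the only genuinely non-formal point, is checking that the character $\lambda_b$ of $C$ actually extends to the full inertia group $W_{(a,b)}$ (not merely that the relevant cohomological obstruction vanishes abstractly): here it is transparent because the extension $W_n = C \rtimes {\mathcal S}_n$ is split, so one extends $\lambda_b$ by declaring ${\mathcal S}_a \times {\mathcal S}_b$ to act by the pullback character, and the twist by $\epsilon$ on the $W_b$-part is consistent since $\epsilon$ restricts to $\lambda_b$'s defining sign on the relevant $C_2$ coordinates — everything else is the textbook Wigner–Mackey argument, for which I would cite \cite{gp}.
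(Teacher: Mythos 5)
Your proof is correct, but note that the paper does not actually prove this statement: it is quoted as a classical theorem of Specht with a pointer to \cite{gp}, so there is no internal argument to compare against, and what you have written is precisely the standard little-groups (Wigner--Mackey/Clifford) proof that the cited source supplies. All the essential verifications are in place: the $\mathcal{S}_n$-orbits on $\widehat{C}$ are indexed by $b=|J|$, the inertia group of the representative $\lambda_b$ is $W_{(a,b)}=W_a\times W_b$ because $C$ is abelian and the stabilizer in $\mathcal{S}_n$ is $\mathcal{S}_a\times\mathcal{S}_b$, the extension of $\lambda_b$ to $W_{(a,b)}$ as $1\boxtimes\epsilon$ exists because the extension $W_n=C\rtimes\mathcal{S}_n$ splits and $\epsilon$ restricts correctly on the $C_2$-coordinates carrying the sign, and Clifford theory then yields in one stroke that each $V_{(\alpha,\beta)}=\mathrm{Ind}_{W_{(a,b)}}^{W_n}\alpha\otimes\epsilon\beta$ is irreducible, that distinct bipartitions give inequivalent modules (different $b$ means different orbits; equal $b$ means different characters of the inertia quotient $\mathcal{S}_a\times\mathcal{S}_b$), and that every irreducible arises this way. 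Your closing count, $\dim V_{(\alpha,\beta)}=\binom{n}{b}f^{\alpha}f^{\beta}$ and $\sum_{a+b=n}\binom{n}{b}^{2}a!\,b!=2^{n}n!$, is a redundant but harmless sanity check, since exhaustion is already guaranteed by the Clifford machinery; if you keep it, state it as a verification rather than as part of the proof of completeness. This classification is also consistent with what the paper needs later: $V_{(\alpha,\beta)}$ and $V_{(\beta,\alpha)}$ are inequivalent as $W_n$-modules and only become equivalent after restriction along $\psi$ to $\mathcal{E}_n(1)$, as in Proposition 9 of the paper.
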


The key point for taking representations of ${\mathcal E}$ from the
 group $W_n$ is the following  proposition.
\begin{prop}\label{psi}
The map below  defines a morphism $\psi$ of algebras,  from
 ${\mathcal E}$ to ${\Bbb C}W_n$.
$$
T_i \mapsto s_i, \qquad
E_i \mapsto e_i:=\frac{1}{2}(1 + t_it_{i+1}).
$$
\end{prop}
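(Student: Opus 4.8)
The plan is to verify that the proposed assignment $T_i \mapsto s_i$, $E_i \mapsto e_i := \frac{1}{2}(1 + t_it_{i+1})$ respects each of the defining relations \eqref{eq1}--\eqref{eq9} of ${\mathcal E}_n(1)$; since ${\mathcal E}$ is presented by generators and relations, checking this on the relations suffices to produce a well-defined algebra homomorphism $\psi\colon {\mathcal E} \to {\Bbb C}W_n$. The first step is to record the basic facts about the $t_i$ and $s_i$ inside $W_n$ that make everything go: the $t_i$ are commuting involutions ($t_i^2 = 1$, $t_it_j = t_jt_i$), each $s_i$ is an involution with $s_i t_i s_i = t_{i+1}$ and $s_i t_{i+1} s_i = t_i$ (from the inductive definition and $s_i^2=1$), while $s_i$ commutes with $t_j$ for $j \ne i, i+1$, and the $s_i$ themselves satisfy the Coxeter braid relations \eqref{re1}--\eqref{re2}. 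From $t_i^2 = 1$ one gets immediately that $e_i$ is idempotent: $e_i^2 = \frac14(1 + t_it_{i+1})^2 = \frac14(2 + 2t_it_{i+1}) = e_i$, which handles \eqref{eq3}. Relation \eqref{eq4}, $E_iE_j = E_jE_i$, follows because all the $t_k$ commute, so the $e_i$ lie in the commutative subalgebra ${\Bbb C}C$.

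Next I would handle the mixed relations \eqref{eq5}--\eqref{eq8}. The key computation is how $s_i$ conjugates $e_j$. Conjugating $e_j = \frac12(1 + t_jt_{j+1})$ by $s_i$ permutes the indices $\{j, j+1\}$ according to the transposition $(i, i+1)$ acting on subscripts. When $|i-j| > 1$, $s_i$ fixes both $t_j$ and $t_{j+1}$, giving $s_i e_j s_i = e_j$, hence $E_iT_j = T_jE_i$; taking $j = i$ gives $s_i e_i s_i = \frac12(1 + t_{i+1}t_i) = e_i$, i.e. \eqref{eq5}, and the $|i-j|>1$ case is \eqref{eq6}. For \eqref{eq7} with, say, $j = i+1$: $s_i$ fixes $t_{i+1}$ and sends $t_i \leftrightarrow t_{i+1}$... more carefully, one checks that $s_{i+1}s_i$ conjugates $t_{i+1}t_{i+2}$ to $t_i t_{i+1}$ (and symmetrically), which translates exactly into $e_{i+1} s_i s_{i+1} = s_i s_{i+1} e_i$, i.e. $E_jT_iT_j = T_iT_jE_i$ for $|i-j|=1$; the case $j = i-1$ is symmetric. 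For \eqref{eq8}, with $|i-j| = 1$ one computes the product $e_i e_j = \frac14(1 + t_it_{i+1})(1 + t_jt_{j+1})$ which, since the two middle indices coincide, involves $t_i, t_{i+1}, t_{i+2}$ (taking $j = i+1$), and one verifies directly that $s_{j}$ — careful, it is $s_j = s_{i+1}$ here — fixes $e_ie_j$ (the three $t$'s appearing get permuted among themselves by the relevant transposition in a way that leaves the symmetric combination $e_ie_j$ invariant), so $E_iE_jT_j = E_iT_jE_i = T_jE_iE_j$; again the other case is symmetric. Finally, \eqref{eq9} with $u = 1$ degenerates to $T_i^2 = 1$, which is just $s_i^2 = 1$, so nothing involving $e_i$ is needed there.

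The routine-but-slightly-delicate part, which I expect to be the main obstacle, is the careful bookkeeping of subscripts in relations \eqref{eq7} and \eqref{eq8}: one must keep straight which $t_k$'s appear in $e_i, e_j$ when $|i-j|=1$ (three distinct indices are involved), and verify that the specific word in the $s$'s appearing in the relation conjugates the relevant product of $t$'s correctly — it is easy to make an off-by-one slip between the $j = i+1$ and $j = i-1$ cases. A clean way to organize this is to note that $W_n$ acts on ${\Bbb C}C = {\Bbb C}[C_2^n]$ through the permutation action of ${\mathcal S}_n$ on the $n$ tensor factors, so $s_i$ acts on ${\Bbb C}C$ by the transposition $(i,i+1)$ of factors; then $e_i$ is a ${\mathcal S}_n$-equivariant-up-to-relabeling element and all the conjugation identities reduce to transparent statements about transpositions acting on subsets of $\{1,\dots,n\}$. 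Once all nine relations are checked, well-definedness of $\psi$ is immediate from the universal property of a presented algebra, completing the proof.
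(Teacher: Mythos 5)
Your proposal is correct and takes essentially the same route as the paper, whose proof consists precisely of the statement that one verifies relations \eqref{eq1}--\eqref{eq8} (with \eqref{eq9} degenerating to $s_i^2=1$ at $u=1$) for $s_i$ and $e_i$; you have simply written out the conjugation computations the paper leaves to the reader. The only point to complete is the middle term of \eqref{eq8}: commutation of $s_j$ with $e_ie_j$ gives $E_iE_jT_j = T_jE_iE_j$, while $E_iT_jE_i$ needs the one extra line $e_i\,(s_je_is_j) = \tfrac14(1+t_it_{i+1})(1+t_it_{j+1}) = e_ie_j$, which follows immediately from your conjugation facts.
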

\begin{proof}
One verifies  that $s_i$'s and $e_i$'s satisfy the relations
 ~\ref{eq1} to ~\ref{eq8} when we put $s_i$ in place of $T_i$,
and $e_i$ to the place of  $E_i$ .
\end{proof}

Thus, from the above proposition the representations
$V_{(\alpha, \beta)}$ are  ${\mathcal E}$-modules.
We are going to prove that  $V_{(\alpha, \beta)}$ and $V_{( \beta, \alpha)}$
are equivalent as  ${\mathcal E}$-modules, and that
$V_{( \alpha, \alpha)}$ is reducible as  ${\mathcal E}$-module.

To do this we need some facts. Firstly,
we will use the Mackey model for describing the induced representations.
 More precisely, let us denote by $V$ an underlying for the
irreducible representation  $\rho$ of a subgroup $H$ of $G$.
Then $\mbox{Ind}_H^G\rho$ can be realized as the
 following vector space
$$
V_{\rho} := \{f: G \longrightarrow V \,
:\, f(hg) = \rho(h)f(g), \, h\in H, \,
g\in G\},
$$
with  action  given by $(gf)(x)= f(xg)$, $g, x\in W_n$.

Let $\{v_i\}$ be a basis of $V$, and $X$ a
set of right coset representatives
of $H$ in $G$. The Dirac basis for $V_{\rho}$ with respect to $X$ and
$\{v_i\}$ is by definition the basis
$\{\delta_{u, i}\, ; u\in X, \, 1\leq i \leq \mbox{dim}V\}$, where
$\delta_{u,i}\in V_{\pi}$ is defined as
$$
\delta_{u, i}(g) =
\left\{\begin{array}{cc}
\rho(h)v_i &\quad\mbox{if} \quad g =hu,\\
0  & \mbox{otherwise}.
\end{array}\right.
$$
(We have $g\delta_{u, i}= \delta_{ug^{-1},i}$.)

Also we use some notations and facts from \cite{dj}.
 Let $a, b\geq 0$ such that $a+b= n$. Let us consider
 the element $w_{a, b}$ in ${\mathcal S}_n$  defined by
 $w_{a, b}= 1$ if $a$ or $b$ is zero. And
$$
w_{a, b} : = (s_{a+b, 1})^b \quad \mbox{if}\quad a,b>0,
$$
where $s_{i,i}= 1$, and
$$
s_{i,j} =
\left\{\begin{array}{ll}
s_{i}s_{i+1,j} &\quad\mbox{if} \quad i<j,\\
s_{i,j+1}s_{j} &\quad\mbox{if} \quad i>j.
\end{array}\right.
$$
We will use  the following properties of the elements $w$'s:
\begin{equation}\label{w1}
w_{a,b}^{-1} = w_{b,a}
\end{equation}
\begin{equation}\label{w2}
w_{a,b}s_k =
\left\{
\begin{array}{cl}
s_{a+k}w_{a,b} & \quad \mbox{if} \quad 1\leq k < b, \\
s_{k-b}w_{a,b} & \quad \mbox{if} \quad b+1\leq k < n.
\end{array}
\right.
\end{equation}
\begin{equation}\label{w3}
{\mathcal S}_{(a,b)}w_{a,b} = w_{a,b}{\mathcal S}_{(b,a)},
\end{equation}
where ${\mathcal S}_{(a, b)}$ is the homomorphic image by $\pi$ of
$W_{(a, b)}$. Thus
${\mathcal S}_{(a, b)}$ is the subgroup of ${\mathcal S}_n$ isomorphic to the
 group ${\mathcal S}_a\times {\mathcal S}_b$.

Let us   denote  by $S_{(a,b)}$ the subset of $S=\{s_1, \ldots , s_{n-1}\}$
  generating the subgroup ${\mathcal S}_{(a,b)}$ of ${\mathcal S}_{n}$.
We have $S_{(a,b)}w_{a,b} = w_{a,b}S_{(b,a)}$.

Let $X_{(a,b)}$ be the set of distinguished  right coset
representatives of ${\mathcal S}_{(a,b)}$ in ${\mathcal S}_n$. Thus
$w_{a,b}\in X_{(a, b)}$, and
\begin{equation}\label{w4}
X_{(a,b)}=w_{a, b} X_{(b,a)}.
\end{equation}

\begin{prop}\label{equiv}
We have $V_{(\alpha, \beta)}\simeq V_{(\beta,\alpha)}$ as
${\mathcal E}$-module.
\end{prop}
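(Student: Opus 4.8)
The goal is to produce an explicit $\mathcal{E}$-linear isomorphism $V_{(\alpha,\beta)}\to V_{(\beta,\alpha)}$. The natural candidate is conjugation by the element $w_{a,b}$, which by \eqref{w3} conjugates $W_{(a,b)}$ to $W_{(b,a)}$ (or rather its $\mathcal{S}$-part; one checks the $C$-part behaves compatibly since $t_i\mapsto t_{\sigma(i)}$ under conjugation by $\sigma\in\mathcal{S}_n$, and $w_{a,b}$ simply swaps the two blocks of coordinates). In the Mackey model, I would define $\Phi\colon V_{(\alpha,\beta)}\to V_{(\beta,\alpha)}$ by $(\Phi f)(g) = f(w_{a,b}^{-1} g)$ after first checking that the representation $\alpha\otimes\epsilon\beta$ of $W_{(a,b)}$, transported through conjugation by $w_{a,b}$, becomes $\beta\otimes\epsilon\alpha$ of $W_{(b,a)}$ — here the point is that the linear character $\epsilon$ is preserved under conjugation (it factors through the sign on the $C$-part and the trivial character on $\mathcal{S}_n$), so conjugation genuinely exchanges the roles of $\alpha$ and $\beta$.

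**Key steps in order.** First I would verify well-definedness: for $f\in V_{(\alpha,\beta)}$ and $h\in W_{(b,a)}$, one needs $(\Phi f)(hg)=(\beta\otimes\epsilon\alpha)(h)\,(\Phi f)(g)$; this reduces, via \eqref{w3} and its analogue for $W_n$, to the covariance of $f$ under $W_{(a,b)}$ together with the identification of the two representations above. Second, I would check that $\Phi$ is a $\mathbb{C}W_n$-module map: since the $W_n$-action is $(gf)(x)=f(xg)$, right translation, and $\Phi$ is left translation by $w_{a,b}^{-1}$, the two commute on the nose, so $\Phi$ intertwines the full group actions. Third — and this is the only place the algebra $\mathcal{E}$ enters beyond what is automatic from $\psi$ — since $\psi\colon\mathcal{E}\to\mathbb{C}W_n$ of Proposition~\ref{psi} realizes both $\mathcal{E}$-module structures by restriction along the \emph{same} algebra map, any $\mathbb{C}W_n$-linear map is automatically $\mathcal{E}$-linear. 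Finally, $\Phi$ is invertible with inverse left translation by $w_{a,b}=w_{b,a}^{-1}$ (using \eqref{w1}), so it is an isomorphism.

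**Main obstacle.** The genuinely substantive point is the bookkeeping identifying $\mathrm{Res}$ of $\alpha\otimes\epsilon\beta$ under conjugation by $w_{a,b}$ with $\beta\otimes\epsilon\alpha$: one must be careful that $w_{a,b}$ conjugates the \emph{first} block $W_a$ (acting on coordinates $1,\dots,a$) to the block acting on coordinates $b+1,\dots,n$, and the \emph{second} block $W_b$ to coordinates $1,\dots,b$, and that under this the tensor factors and the twist by $\epsilon$ swap correctly — equivalently, that $\epsilon$ restricted to either block is intrinsic and not affected by which coordinates the block occupies. Once that is pinned down, everything else is the formal Mackey-model manipulation above, and the $\mathcal{E}$-linearity is free. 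I expect the write-up to mostly consist of checking well-definedness of $\Phi$ using \eqref{w3}–\eqref{w4}, with a remark that $\mathbb{C}W_n$-linearity forces $\mathcal{E}$-linearity via $\psi$.
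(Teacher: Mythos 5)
There is a genuine gap, and it sits exactly at the point you flag as the ``main obstacle'' and then wave through. Conjugation by $w_{a,b}$ does \emph{not} turn $\alpha\otimes\epsilon\beta$ into $\beta\otimes\epsilon\alpha$: the twist by $\epsilon$ travels with $\beta$. Transporting $\alpha\otimes\epsilon\beta$ along $h'\mapsto w_{a,b}h'w_{a,b}^{-1}$ (and flipping the tensor factors, which your map must also do) produces the representation $\epsilon\beta\otimes\alpha$ of $W_{(b,a)}$, not $\beta\otimes\epsilon\alpha$; the conjugation-invariance of $\epsilon$ only guarantees that $\epsilon|_{W_b}$ stays attached to the $\beta$-factor, it does not transfer it to the $\alpha$-factor. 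Hence your $\Phi$ (after fixing the minor issue that the covariance check with (\ref{w3}) forces translation by $w_{a,b}$ rather than $w_{a,b}^{-1}$ for a map out of $V_{(\alpha,\beta)}$) is a $W_n$-isomorphism $V_{(\alpha,\beta)}\simeq \mbox{Ind}_{W_{(b,a)}}^{W_n}(\epsilon\beta\otimes\alpha)$, which is again the irreducible labelled by the bipartition $(\alpha,\beta)$ --- it is not a map into $V_{(\beta,\alpha)}$. Moreover the strategy cannot be repaired at the level of $W_n$-modules: by Specht's theorem $V_{(\alpha,\beta)}$ and $V_{(\beta,\alpha)}$ are non-isomorphic irreducible ${\Bbb C}W_n$-modules whenever $\alpha\neq\beta$ (they differ by an $\epsilon$-twist), so by Schur's lemma there is no nonzero ${\Bbb C}W_n$-linear map between them at all, and your step ``any ${\Bbb C}W_n$-linear map is automatically $\mathcal{E}$-linear'' has nothing to apply to.

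This is precisely why the statement is genuinely about $\mathcal{E}$ and why the paper verifies equivariance only for the generators $s_r$ and $e_r$ of $\psi(\mathcal{E})$ on the Dirac basis, never for the individual $t_r$. The map $\delta_{u,(i,j)}\mapsto\delta_{w^{-1}u,(j,i)}$ commutes with the $s_r$ because $\epsilon(s')=1$, so on the symmetric-group part swapping $\alpha$ and $\beta$ costs nothing; and it commutes with $e_r=\tfrac12(1+t_rt_{r+1})$ because $t_rt_{r+1}$ involves an even number of $t$'s, so the character values of $\alpha\otimes\epsilon\beta$ at $ut_rt_{r+1}u^{-1}$ and of $\beta\otimes\epsilon\alpha$ at $w^{-1}ut_rt_{r+1}u^{-1}w$ agree --- this is the diagonal-rotation computation closing the paper's proof, and it is where the real content lies. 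If you want a structural version of your argument, the correct route is: left translation by $w_{a,b}$ gives a $W_n$-isomorphism $V_{(\alpha,\beta)}\simeq\epsilon\otimes V_{(\beta,\alpha)}$, and the $\epsilon$-twist is invisible to $\psi(\mathcal{E})$ because $\epsilon(s_i)=1$ and $\epsilon(t_it_{i+1})=1$, so the identity map is an $\mathcal{E}$-isomorphism $\epsilon\otimes V_{(\beta,\alpha)}\simeq V_{(\beta,\alpha)}$; composing the two gives the proposition. As written, however, your proof fails at the well-definedness step, and the claimed freeness of $\mathcal{E}$-linearity is exactly the part that is false.
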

\begin{proof}
 Let $\{v_i\}$ (respectively $\{v^{\prime}_j\}$) be a basis for an
underlying $V$ (respectively $V^{\prime}$) of the representation $\alpha$
 (respectively $\beta$) of ${\mathcal S}_a$ (respectively ${\mathcal S}_b$).
Let $\{\delta_{u,(i, j)}\}$ be a Dirac basis for $V_{(\alpha, \beta)}$,
relative to $X_{(a,b)}$ and $\{v_i\otimes v_j\}$. The  maps
$\delta_{u,(i, j)} \mapsto \delta_{w^{-1}u,(j, i)}$
($w=w_{a,b}$)  define a linear isomprphism $\Phi$ from
$V_{(\alpha, \beta)}$ to $V_{(\beta,\alpha)}$. Thus, for the proof of the
proposition we need only to prove:
$$
T_r \circ \Phi = \Phi \circ T_r, \leqno (\ref{equiv}.1)
$$
$$
e_r \circ \Phi = \Phi \circ e_r, \leqno (\ref{equiv}.2)
$$
For any $1\leq r\leq n-1$.

To prove ~\ref{equiv}.1,  we need only to check
$$
(s \circ \Phi)(\delta_{u, (i,j)}) = (\Phi \circ s)(\delta_{u,(i,j)}),
$$
for all $s \in S$. Now, from a
result of V. Deodhar (Lemma 2.1.2 in \cite{gp}), we have either the cases:
$us \in X_{(a,b)}$ or $us= s^{\prime}u$ for some $s^{\prime}\in S_{(a,b)}$.
If we are in the first case the situation is trivial. So, suppose we are in
the second case. Put
$
(\alpha\otimes\epsilon\beta)(s^{\prime}) (v_i\otimes v^{\prime}_j) =
\sum_{k,l}\lambda_{k,l}v_k\otimes v^{\prime}_l,
$
then, it is easy to check that
$
(\beta\otimes\epsilon\alpha)(w^{-1}s^{\prime}w)
(v_j^{\prime}\otimes v_i) =
\sum_{k,l}\lambda_{k,l}v^{\prime}_l\otimes v_k.
$
(notice that $\epsilon (s^{\prime})= 1 $).

Now, we have
$$
s\delta_{u,(i,j)}= \sum_{k,l}\lambda_{k,l}\delta_{u,(k,l)}.
$$
In fact, let $g=hu\in W_n$, $h\in W_{(a,b)}$, we have
$
(s\delta_{u, (i,j)})(g) = \delta_{u, (i,j)}(h(us)) =
\delta_{u, (i,j)}(hs^{\prime}u)$. Thus,
\begin{eqnarray*}
(s\delta_{u, (i,j)})(g) & = &
(\alpha \otimes\epsilon\beta)(hs^{\prime})v_i\otimes v_j^{\prime}\\
& = &
(\alpha \otimes\epsilon\beta)(h)
(\alpha \otimes\epsilon\beta)(s^{\prime})v_i\otimes v_j^{\prime}\\
& = &
(\alpha \otimes\epsilon\beta)(h)\sum_{k,l}\lambda_{k,l}
v_k\otimes v_l^{\prime}\\
&=&
\sum_{k,l}\lambda_{k,l}(\alpha \otimes\epsilon\beta)(h)
v_k\otimes v_l^{\prime}\\
&=&
\sum_{k,l}\lambda_{k,l}\delta_{u,(k,l)}(g).
\end{eqnarray*}
In a similar way we get
$$
s\delta_{w^{-1}u,(j,i)} =
\sum_{k,l}\lambda_{k,l}\delta_{w^{-1}u,(k,l)}
$$

Thus,
$$
\delta_{u, (i,j)}\stackrel{s}{\longmapsto}
\sum_{l,k}\lambda_{k,l}\delta_{u, (k,l)}
 \stackrel{\Phi}{\longmapsto} \sum_{k,l}\lambda_{k,l}\delta_{w^{-1}u, (l,k)}.
$$

On the other hand, we have
$$
\delta_{u, (i,j)}
\stackrel{\Phi}{\longmapsto} \delta_{w^{-1}u, (j,i)}
\stackrel{s}{\longmapsto} s\delta_{w^{-1}u, (j,i)} =
\sum_{k,l}\lambda_{k,l}\delta_{w^{-1}u, (l,k)}.
$$

Hence ~\ref{equiv}.1 follows.

To prove ~\ref{equiv}.2,  we  see that
\begin{equation}\label{titi+1}
t_rt_{r+1}\delta_{u,(i,j)}=
(\alpha \otimes \epsilon\beta )(ut_rt_{r+1}u^{-1})\delta_{u,(i,j)}.
\end{equation}
(notice that $ \alpha \otimes \epsilon\beta$ is a linear character on $C$.)
Then,
\begin{eqnarray*}
(\Phi\circ t_rt_{r+1} )(\delta_{u,(i,j)})&=&
(\alpha \otimes \epsilon\beta )(ut_rt_{r+1}u^{-1})\delta_{w^{-1}u,(i,j)},\\
(t_rt_{r+1}\circ\Phi)(\delta_{u,(i,j)})&=&
(\beta \otimes \epsilon\alpha )(w^{-1}ut_rt_{r+1}u^{-1}w)
\delta_{w^{-1}u,(i,j)}.
\end{eqnarray*}
Therefore we deduce ~\ref{equiv}.2, because $t_it_{i+1}$ is a diagonal
matrix, and
$$
w^{-1}diag(d_1, d_2 \ldots, d_a,d_{a+1}, \ldots , d_n)w =
diag(d_{a+1}, \ldots , d_n, d_1, d_2, \ldots d_a).
$$
\end{proof}
 From eq.(\ref{titi+1}) we get:
\begin{cor}\label{e} For any $\delta$  in a Dirac basis, we have
$$
e_r\delta =
\left\{\begin{array}{cl}
1 &\quad\mbox{if} \quad r\not=a,\\
0 &\quad\mbox{if} \quad r=a.
\end{array}\right..
$$
\end{cor}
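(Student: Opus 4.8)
The plan is to extract the action of $e_r$ straight out of equation~(\ref{titi+1}). Recall from Proposition~\ref{psi} that $e_r=\frac{1}{2}(1+t_rt_{r+1})$, and that $t_rt_{r+1}\in C$. By~(\ref{titi+1}), $t_rt_{r+1}$ acts on a Dirac basis vector $\delta_{u,(i,j)}$ as multiplication by the scalar $(\alpha\otimes\epsilon\beta)(u\,t_rt_{r+1}\,u^{-1})$; hence
$$
e_r\,\delta_{u,(i,j)}=\frac{1}{2}\Bigl(1+(\alpha\otimes\epsilon\beta)(u\,t_rt_{r+1}\,u^{-1})\Bigr)\,\delta_{u,(i,j)}.
$$
Since $u\,t_rt_{r+1}\,u^{-1}\in C$ and $(\alpha\otimes\epsilon\beta)$ restricts to a linear character of $C$ with values in $\{\pm1\}$, this already shows that $e_r$ is diagonal on the Dirac basis with eigenvalues $0$ and $1$ — consistent with $e_r$ being idempotent, by~(\ref{eq3}) and Proposition~\ref{psi}. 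Everything therefore comes down to computing the sign $(\alpha\otimes\epsilon\beta)(u\,t_rt_{r+1}\,u^{-1})$.

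For this I would first make the character $(\alpha\otimes\epsilon\beta)$ restricted to $C$ explicit. Writing $W_{(a,b)}\cong W_a\times W_b$, the factor $\alpha$ is inflated from $\mathcal{S}_a$ and so is trivial on each $C_2$ sitting in the first $a$ coordinates, while $\epsilon\beta$, with $\beta$ inflated from $\mathcal{S}_b$, is trivial on each $C_2$ in the last $b$ coordinates except for the twist by $\epsilon$, which contributes $\epsilon(t)=-1$ per generator; thus $(\alpha\otimes\epsilon\beta)(t_k)=1$ for $k\le a$ and $=-1$ for $k>a$. Next, as $u\in\mathcal{S}_n$ is a permutation (monomial) matrix, conjugation merely permutes the diagonal, $u\,t_rt_{r+1}\,u^{-1}=t_{u(r)}t_{u(r+1)}$ — the same rule already used in the proof of Proposition~\ref{equiv} via $w^{-1}\mathrm{diag}(\dots)w$. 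Combining the two, the scalar equals $+1$ when $u(r)$ and $u(r+1)$ lie on the same side of the cut after the $a$-th coordinate, and $-1$ when they lie on opposite sides; feeding this into the display gives $e_r\delta_{u,(i,j)}=\delta_{u,(i,j)}$ in the first case and $0$ in the second.

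The remaining step is to match the dichotomy ``$u(r),u(r+1)$ on the same side of $a$'' with the condition ``$r\neq a$'' of the statement. For the base representative $u=1$ this is transparent: $t_rt_{r+1}$ carries its two $t$'s in positions $r,r+1$, which straddle the cut precisely when $r=a$ and lie on one side otherwise, so the scalar is $1$ for $r\neq a$ and $0$ for $r=a$. For the other representatives of $X_{(a,b)}$ one must use their description as the distinguished (minimal-length) shuffles for $\mathcal{S}_{(a,b)}$, already invoked before Proposition~\ref{equiv} (Deodhar's lemma, and the properties of the $w$'s taken from~\cite{dj}), in order to locate $u(r)$ and $u(r+1)$ relative to $a$. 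I expect this coset-representative bookkeeping to be the only genuinely delicate point; once it is settled the corollary follows immediately from the boxed formula, and along the way one also records the general fact that $e_r$ acts on any Dirac basis as a coordinate projection with $0/1$ eigenvalues.
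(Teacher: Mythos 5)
Your opening reduction is exactly what the paper's own (one-line) justification amounts to: by (\ref{titi+1}) the element $t_rt_{r+1}$ acts on $\delta_{u,(i,j)}$ by the scalar $(\alpha\otimes\epsilon\beta)(ut_rt_{r+1}u^{-1})$, hence $e_r=\frac{1}{2}(1+t_rt_{r+1})$ acts by $\frac{1}{2}\bigl(1+(\alpha\otimes\epsilon\beta)(t_{u(r)}t_{u(r+1)})\bigr)$, where the character of $C$ is $+1$ on $t_k$ for $k\leq a$ and $-1$ for $k>a$. The genuine gap is the step you defer as ``coset-representative bookkeeping'': the claim that for every distinguished representative $u\in X_{(a,b)}$ the indices $u(r),u(r+1)$ lie on the same side of the cut at $a$ precisely when $r\neq a$. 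You give no argument for this, and in fact none exists, because it is false once $a,b>0$ and $n\geq 3$. Take $n=3$, $(a,b)=(2,1)$, $r=1\neq a$ and $u=s_2$; this $u$ is the minimal-length element of the coset $\mathcal{S}_{(2,1)}s_2$, hence lies in $X_{(2,1)}$, and being an involution it makes the choice of conjugation convention irrelevant. Then $ut_1t_2u^{-1}=t_1t_3$ and $(\alpha\otimes\epsilon\beta)(t_1t_3)=-1$, so $e_1\delta_{u,(i,j)}=0$ rather than $\delta_{u,(i,j)}$. More generally $u=s_a$ with $r=a-1$ (or $r=a+1$) produces eigenvalue $0$ with $r\neq a$ whenever $a,b>0$ and $n\geq 3$. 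So the eigenvalue genuinely depends on $u$, and the dichotomy ``$r\neq a$ versus $r=a$'' describes it only for special $u$ (for instance $u=1$) or when $a=0$ or $b=0$.

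Consequently you cannot finish along the announced route: read literally for all Dirac basis vectors, the statement you are trying to prove is stronger than what is true, and since the paper offers nothing beyond ``from (\ref{titi+1})'', your computation in fact exposes that the corollary needs to be read as the eigenvalue formula rather than as a condition on $r$ alone. What your formula does establish, and what is all that the paper later uses, is: (i) $e_r$ is diagonal on the Dirac basis with eigenvalues $0$ or $1$, namely $\frac{1}{2}\bigl(1+(\alpha\otimes\epsilon\beta)(t_{u(r)}t_{u(r+1)})\bigr)$; (ii) when $\beta=\phi$ (the case invoked in Proposition \ref{esen}) the character is trivial on $C$, so every eigenvalue is $1$ and $e_r$ acts as the identity on the whole Dirac basis; (iii) in the setting of Proposition \ref{descom} ($a=b=m$) the eigenvalues on $\delta_{u,i}$ and $\delta_{wu,i}$ coincide, since conjugation by $w=w_{m,m}$ interchanges the blocks $\{1,\dots,m\}$ and $\{m+1,\dots,2m\}$, flipping the sign of the character on each $t_k$ and hence preserving the product; this is what makes $e_r V_{\alpha}^{\pm}\subseteq V_{\alpha}^{\pm}$. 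Replacing the deferred bookkeeping by these observations closes the argument for every use the paper makes of Corollary \ref{e}; trying to prove the bookkeeping claim itself would not succeed.
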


Now, we   decompose the ${\mathcal E}$-module
$V_{(\alpha, \alpha)}$ into two modules. Note that this situation appears only
when $n$ is an even number. Set $m = n/2$, and put  $S^{\prime}=  S-\{s_m\}$.
 Thus  ${\mathcal S}_{(m, m)}$  is generated by $S^{\prime}$.

Let $w$ be the element $w_{m,m}$, and let $X$ be the set of distinguished
elements $X_{(m, m)}$.  From ~\ref{w4} we have $X = wX$. Then we can  choose a
subset $Y$ of $X$ such that $X = Y \cup wY$ (disjoint union).
Let $\{\delta_{u,i}\}$ be the Dirac basis of $V_{(\alpha, \alpha)}$
with respect to $X$ and  the basis $\{v_i\}$ of  $V\otimes V$,
where $V$ is an underlying for the representation $\alpha$ of
${\mathcal S}_m$. We have

\begin{prop}\label{descom}
 $V_{(\alpha, \alpha)} = V_{\alpha}^+ \oplus V_{\alpha}^-$
as ${\mathcal E}$-module, where
$$
V_{\alpha}^+ := \langle \delta_{u,i} + \delta_{wu,i}\, ; \,
u \in Y, \, 1\leq i\leq (\mbox{dim}V)^2 \rangle ,
$$
$$
V_{\alpha}^- := \langle \delta_{u,i} - \delta_{wu,i}\, ; \,
u \in Y, \, 1\leq i\leq (\mbox{dim}V)^2 \rangle.
$$
\end{prop}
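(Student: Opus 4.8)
The statement asserts that the subspaces $V_\alpha^+$ and $V_\alpha^-$ are $\mathcal{E}$-submodules and that $V_{(\alpha,\alpha)}$ is their direct sum. The direct-sum decomposition at the level of vector spaces is immediate: since $X = Y \sqcup wY$ and $w^2 = w_{m,m}^2 = 1$ (by (\ref{w1}), as $w_{m,m}^{-1} = w_{m,m}$), the assignment $\delta_{u,i} \mapsto \delta_{wu,i}$ (for $u \in Y$) together with $\delta_{wu,i} \mapsto \delta_{u,i}$ is a well-defined involution $\sigma$ of $V_{(\alpha,\alpha)}$, and $V_\alpha^{\pm}$ are its $\pm 1$-eigenspaces; over $\mathbb{C}$ this gives $V_{(\alpha,\alpha)} = V_\alpha^+ \oplus V_\alpha^-$. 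So the real content is that $\sigma$ commutes with the $\mathcal{E}$-action, equivalently that each eigenspace is stable under every $T_r$ and every $e_r$.

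**Main steps.** First I would observe that $\sigma$ is nothing but the restriction to $V_{(\alpha,\alpha)}$ of the map $\Phi$ constructed in Proposition~\ref{equiv} in the special case $\alpha = \beta$, $a = b = m$: there $\Phi(\delta_{u,(i,j)}) = \delta_{w^{-1}u,(j,i)}$, and when the two partitions coincide the index swap $(i,j)\mapsto(j,i)$ is just a relabelling of the basis of $V\otimes V$, while $w^{-1} = w$. Thus $\Phi = \sigma$ up to this relabelling, and Proposition~\ref{equiv} already proves precisely that $T_r\circ\Phi = \Phi\circ T_r$ and $e_r\circ\Phi = \Phi\circ e_r$ for all $r$. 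Hence $\sigma$ is an $\mathcal{E}$-module endomorphism, so its eigenspaces $V_\alpha^{\pm}$ are $\mathcal{E}$-submodules. Combined with the vector-space decomposition above, this finishes the proof.

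**Alternative, more self-contained route.** If one prefers not to invoke Proposition~\ref{equiv} as a black box (because there $\alpha\neq\beta$ is implicitly in play), I would instead verify directly that $T_r$ and $e_r$ each send a generator $\delta_{u,i} \pm \delta_{wu,i}$ back into $V_\alpha^{\pm}$. For this one uses the same Deodhar dichotomy as in Proposition~\ref{equiv}: for $s = s_r \in S$, either $us \in X$ and then $\delta_{u,i}s$ is again a Dirac basis vector $\delta_{us,k}$ (and one checks $us \in Y \Leftrightarrow wus \in wY$ using $X = wX$ and $w^2=1$, so the $\pm$-pairing is respected), or $us = s' u$ with $s' \in S_{(m,m)}$ and then $s\delta_{u,i} = \sum_k \lambda_{k}\delta_{u,k}$ with the \emph{same} coefficients $\lambda_k$ appearing for $s\delta_{wu,i}$ — this last point uses $S_{(m,m)}w = wS_{(m,m)}$, i.e. (\ref{w3})/(\ref{w4}), to see that conjugating $s'$ across $w$ lands back in $S_{(m,m)}$ and acts by the same matrix on $V\otimes V$ after the index swap, which is trivial here since $\alpha=\beta$. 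For $e_r = \tfrac12(1 + t_rt_{r+1})$ one uses (\ref{titi+1}): $t_rt_{r+1}$ acts on $\delta_{u,i}$ by the scalar $(\alpha\otimes\epsilon\alpha)(u\,t_rt_{r+1}\,u^{-1})$, and the matrix identity $w^{-1}\,\mathrm{diag}(d_1,\dots,d_n)\,w = \mathrm{diag}(d_{m+1},\dots,d_n,d_1,\dots,d_m)$ shows this scalar is the same on $\delta_{u,i}$ and on $\delta_{wu,i}$, so $e_r$ preserves each $\pm$-combination.

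**Expected obstacle.** The only delicate bookkeeping is checking that the partition $Y \sqcup wY$ interacts correctly with right multiplication by $S$: when $us \in X$ one must confirm that the involution $u \mapsto wu$ on $X$ intertwines with $u \mapsto us$, so that $T_r$ genuinely maps $V_\alpha^+$ to $V_\alpha^+$ rather than mixing it with $V_\alpha^-$. This follows formally because $\sigma$ is $G$-equivariant for the induced action — which is exactly what (\ref{equiv}.1) encodes — so I expect the cleanest write-up to simply cite Proposition~\ref{equiv}; the direct verification, while elementary, is where a careless argument could go wrong. Everything else is routine linear algebra over $\mathbb{C}$.
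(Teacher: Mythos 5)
Your primary argument takes a genuinely different route from the paper, and your ``alternative, self-contained route'' is essentially the paper's own proof. The paper does not invoke Proposition \ref{equiv} as a black box: it re-runs the Deodhar dichotomy for each $s\in S$ (either $us\in X$, or $us=s'u$ with $s'\in S'$), proves the two claims $s\delta_{u,i}=\sum_j\lambda_{i,j}\delta_{u,j}$ and $s\delta_{wu,i}=\sum_j\lambda_{i,j}\delta_{wu,j}$ by the same kind of computation as in Proposition \ref{equiv} (using (\ref{w3}) to move $s'$ across $w$), and settles stability under $e_r$ by Corollary \ref{e}. Your main route --- observe that for $\alpha=\beta$ the map $\Phi$ of Proposition \ref{equiv} is an involution because $w_{m,m}^{-1}=w_{m,m}$, that it is $\mathcal{E}$-equivariant by (\ref{equiv}.1) and (\ref{equiv}.2), and take its $\pm1$-eigenspaces --- is cleaner and avoids repeating those computations; what the paper's direct verification buys is independence from the precise formulation of Proposition \ref{equiv}.

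The one place where your write-up (in both routes) is too quick is the tensor-index swap, and that is exactly the point a careful proof must address. $\Phi$ sends $\delta_{u,(i,j)}$ to $\delta_{wu,(j,i)}$, so its eigenspaces are spanned by the combinations $\delta_{u,(i,j)}\pm\delta_{wu,(j,i)}$; when $\dim V>1$ these are \emph{not} the spaces spanned by $\delta_{u,k}\pm\delta_{wu,k}$ for a basis $\{v_k\}$ of $V\otimes V$ (that latter span is independent of the choice of basis, so no ``relabelling'' identifies the two). Concretely, if $us=s'u$ with $s'=s_k$, $k<m$, then $ws'w=s_{m+k}$, so $s'$ acts on the first tensor factor in $s\delta_{u,(i,j)}$ but on the second in $s\delta_{wu,(i,j)}$; the unswapped combinations are therefore not stable in general, while the swapped ones are. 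Thus your phrases ``the index swap is just a relabelling'' and, in the alternative route, ``which is trivial here since $\alpha=\beta$'' hide the real content. To be fair, the paper's claim (b) glosses the very same point when it reads $(\alpha\otimes\epsilon\alpha)(ws'w)=(\alpha\otimes\epsilon\alpha)(s')$ off (\ref{w3}); both arguments become correct once the pairing $\delta_{u,(i,j)}\pm\delta_{wu,(j,i)}$ is used, and your treatment of $e_r$ via (\ref{titi+1}) and the diagonal-conjugation identity is unaffected, since $t_rt_{r+1}$ acts by the same scalar on $\delta_{u,\cdot}$ and $\delta_{wu,\cdot}$.
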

\begin{proof} The decomposition of  vector space  is obvious.
So, we  need only to prove that $V_{\alpha}^+$ (respectively
$V_{\alpha}^-$) is a
${\mathcal E}$- module. For this,  it is sufficient to prove:
$$
s(\delta_{u,i} + \delta_{wu,i}) \in V_{\alpha}^+ \quad (s\in S),
\leqno (\ref{descom}.1)
$$
$$
e_r (\delta_{u,i} + \delta_{wu,i}) \in V_{\alpha}^+.
\leqno (\ref{descom}.2)
$$

Let us see ~\ref{descom}.1. For any $s \in S$, we have
$s\delta_{u,i} = \delta_{us,i}$. Again from  V. Deodhar's result, we have
either the cases:  $us \in X$
or $us=s^{\prime}u$, for some $s^{\prime}\in S^{\prime}$. Suppose we are
in the first case. If $us=u^{\prime}\in Y$, the situation
is trivial,  if $us= u^{\prime}\in wY$, hence $wu^{\prime}\in Y$, then
$s(\delta_{u,i} + \delta_{wu,i}) =
\delta_{wu^{\prime},i}+\delta_{u^{\prime},i}\in V_{\alpha}^+$.

If we are in  the second case, put
$us = s^{\prime}u$, with $s^{\prime}\in S^{\prime}$. Set
$$
(\alpha \otimes \epsilon\alpha) (s^{\prime})v_i = \sum_{j}\lambda_{i,j}v_j.
$$
We claim that
$$
(a)\,\, s\delta_{u, i} = \sum_{j}\lambda_{i,j}\delta_{u,j}, \quad \mbox{and}
 \quad (b)\,\, s\delta_{wu, i} = \sum_{j}\lambda_{i,j}\delta_{wu,j}.
$$
Then we
have obtained what we were looking for,
since
$$
s(\delta_{u,i}  +\delta_{wu, i}) = \sum_{j}\lambda_{i,j}(\delta_{u,i}+
\delta_{wu,i}).
$$

Now,  claim (a)  is easy to check. Let us prove    claim (b).
Let $x= h(wu)\in W_n$, with $h\in {\mathcal S}_{(m, m)}$, we have
$xs = h(wu)s =hws^{\prime}u$. Then
\begin{eqnarray*}
(s\delta_{wu,i})(x)& = & \delta_{wu,i}(hws^{\prime}u)\\
                   & = & (\alpha \otimes \epsilon\alpha)
                   (hws^{\prime}w)(v_i) \qquad (hws^{\prime}u =
                   (hws^{\prime}w)wu)\\
                   & = & (\alpha \otimes \epsilon\alpha)(h)
                   (\alpha \otimes \epsilon\alpha)(ws^{\prime}w)(v_i) \\
                   & = & (\alpha \otimes \epsilon\alpha)(h)
                   (\alpha \otimes \epsilon\alpha)(s^{\prime})(v_i)
                   \qquad (\mbox{from (\ref{w3})})\\
                   & = & (\alpha \otimes \epsilon\alpha)(h)
                   \sum_{j}\lambda_{i,j}v_j\\
                   & = & \sum_{j}\lambda_{i,j}
                   (\alpha \otimes \epsilon\alpha)(h)v_j\\
                   & = & \sum_{j}\lambda_{i,j}\delta_{wu,j}v_j.
\end{eqnarray*}
Then $s\delta_{wu, i} = \sum_{j}\lambda_{i,j}\delta_{wu,j}$.

Now ~\ref{descom}.2 follows from (\ref{e}).

Similarly one can prove that $V_{\alpha}^-$ is a ${\mathcal E}$-module.
\end{proof}

Now, we have two homomorphisms $\varphi_0$ and $\varphi_{1}$
 from ${\mathcal E}$ to the algebra ${\Bbb C}{\mathcal S}_n$:
$\varphi_0$ is defined by sending $T_i$ to $s_i$, and $E_i$ to $0$, and
$\varphi_1$ is defined by sending $T_i$ to $s_i$, and $E_i$ to $1$. Then,
these morphisms yield two families of irreducible representations
non-equivalent of ${\mathcal E}$. Set $\{(\alpha, 0)\}$ (respectively
$\{(\alpha, 1)\}$) the family of
irreducible representations  yielded by $\varphi_0$ (respectively
$\varphi_1$).
\begin{prop}\label{esen}
We have the following equivalence of ${\mathcal E}$-modules:
$$
(\alpha, 1)\simeq V_{(\alpha, \phi)}\qquad (\mbox{$\alpha$  partition of $n$,
$\forall n$}).
$$
\end{prop}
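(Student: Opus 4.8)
The plan is to unwind both definitions and observe that they produce the same operators on the same vector space. First I would deal with the empty second block: in $V_{(\alpha,\phi)}$ we have $\beta=\phi$, a partition of $b=0$, hence $a=n$ and $W_{(a,b)}=W_{(n,0)}=W_n$, so the functor $\mathrm{Ind}_{W_{(a,b)}}^{W_n}$ is trivial. More precisely, in the Mackey model $V_\rho=\{f:W_n\to V \,:\, f(hg)=\rho(h)f(g)\}$ with $H=W_n$ and $\rho=\alpha\otimes\epsilon\phi$, every such $f$ satisfies $f(g)=\rho(g)f(1)$, so $f\mapsto f(1)$ is a $W_n$-module isomorphism $V_{(\alpha,\phi)}\cong V$; here $\epsilon\phi$ is the trivial character of the trivial group $W_0$, so $\rho$ is simply the pull-back of $\alpha$ along $\pi:W_n\to\mathcal S_n$. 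In particular each $t_j$ acts on $V_{(\alpha,\phi)}$ as the identity, because $\pi(t_j)=1$.

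Next I would read off the $\mathcal E$-module structure of $V_{(\alpha,\phi)}$ coming from $\psi$. Under the identification above, $T_i$ acts as $\psi(T_i)=s_i$, i.e. as $\alpha(s_i)$; and $E_i$ acts as $\psi(E_i)=e_i=\frac12(1+t_it_{i+1})$, which equals $\frac12(1+1)=\mathrm{id}$ since $t_i$ and $t_{i+1}$ both act trivially. Equivalently, this is Corollary~\ref{e} specialised to $a=n$: then $r\in\{1,\ldots,n-1\}$ is never equal to $a$, so $e_r$ acts as the identity on every Dirac basis vector; or one may invoke (\ref{titi+1}) directly.

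Finally I would compare with $(\alpha,1)$, which by definition is the pull-back of the irreducible $\mathcal S_n$-representation $\alpha$ along $\varphi_1:\mathcal E\to{\Bbb C}\mathcal S_n$. On the underlying space $V$, $T_i$ there acts by $\alpha(\varphi_1(T_i))=\alpha(s_i)$ and $E_i$ by $\alpha(\varphi_1(E_i))=\alpha(1)=\mathrm{id}$ — exactly the operators found above. Hence the isomorphism $V_{(\alpha,\phi)}\cong V$ is in fact an isomorphism of $\mathcal E$-modules, giving $(\alpha,1)\simeq V_{(\alpha,\phi)}$.

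I do not expect a genuine obstacle here; the only point requiring a little care is the bookkeeping for the empty block $b=0$ — checking that $W_{(n,0)}=W_n$, that $\epsilon\phi$ is the trivial character, and that the Mackey realisation collapses to the pull-back of $\alpha$ — after which everything rests on the single observation that $e_i$ acts as $1$ whenever every $t_j$ does.
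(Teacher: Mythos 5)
Your proposal is correct and follows essentially the same route as the paper: the paper's proof is a one-line appeal to Corollary~\ref{e} (with $a=n$, so $e_r$ acts as the identity on the Dirac basis), which is exactly the key observation you make. You simply spell out the parts the paper leaves implicit — that for $b=0$ the induction collapses to the pull-back of $\alpha$ along $\pi$, and that the $T_i$-actions then match the $\varphi_1$ pull-back — so the extra bookkeeping is sound but not a different argument.
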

\begin{proof} The equivalence follows from the fact that
$e_i$ acts trivially on the Delta basis. See Corollary ~\ref{e}.
\end{proof}

We shall denote the  found representation by \lq\lq Young diagram\rq\rq ,
 which has encoded  information of the dimension of the representation.
 The representations of Proposition ~\ref{equiv} are denoted by
$(\alpha, \beta)$, and the representations $V_{\alpha}^+$
(respectively $V_{\alpha}^-$ ) of the Proposition ~\ref{descom} are denoted
by $(\alpha, +)$ (respectively $(\alpha, -)$).

In the case $n=2$ the algebra ${\mathcal E}$ is of dimension 4, and we have
 four  non-equivalent representations of dimension 1 for ${\mathcal E}$:
$(
\setlength\unitlength{0.3ex}
\begin{picture}(10,5)
\put(0,0){\line(1,0){5}}
\put(0,5){\line(1,0){5}}
\put(0,0){\line(0,1){5}}
\put(5,0){\line(0,1){5}}

\put(5,0){\line(1,0){5}}
\put(5,5){\line(1,0){5}}
\put(5,0){\line(0,1){5}}
\put(10,0){\line(0,1){5}}
\end{picture}, \phi)$,
$(
\setlength\unitlength{0.3ex}
\begin{picture}(5,5)
\put(0,-2){\line(1,0){5}}
\put(0,3){\line(1,0){5}}
\put(0,-2){\line(0,1){5}}
\put(5,-2){\line(0,1){5}}

\put(0,8){\line(1,0){5}}
\put(0,8){\line(1,0){5}}
\put(0,3){\line(0,1){5}}
\put(5,3){\line(0,1){5}}
\end{picture}, \phi )$,
$(
\setlength\unitlength{0.3ex}
\begin{picture}(5,5)
\put(0,0){\line(1,0){5}}
\put(0,5){\line(1,0){5}}
\put(0,0){\line(0,1){5}}
\put(5,0){\line(0,1){5}}
\end{picture}, +)$,
$(
\setlength\unitlength{0.3ex}
\begin{picture}(5,5)
\put(0,0){\line(1,0){5}}
\put(0,5){\line(1,0){5}}
\put(0,0){\line(0,1){5}}
\put(5,0){\line(0,1){5}}
\end{picture}, -).
$

In the case $n=3$ the irreducible representations of ${\mathcal E}$ are:

\begin{eqnarray*}
 & \mbox{dimension} \\
(
\setlength\unitlength{0.3ex}
\begin{picture}(15,5)
\put(0,0){\line(1,0){5}}
\put(0,5){\line(1,0){5}}
\put(0,0){\line(0,1){5}}
\put(5,0){\line(0,1){5}}

\put(5,0){\line(1,0){5}}
\put(5,5){\line(1,0){5}}
\put(5,0){\line(0,1){5}}
\put(10,0){\line(0,1){5}}

\put(10,0){\line(1,0){5}}
\put(10,5){\line(1,0){5}}
\put(10,0){\line(0,1){5}}
\put(15,0){\line(0,1){5}}
\end{picture}, \phi) & 1 \\
(
\setlength\unitlength{0.3ex}
\begin{picture}(5,10)
\put(0,0){\line(1,0){5}}
\put(0,5){\line(1,0){5}}
\put(0,0){\line(0,1){5}}
\put(5,0){\line(0,1){5}}

\put(0,5){\line(1,0){5}}
\put(0,10){\line(1,0){5}}
\put(0,5){\line(0,1){5}}
\put(5,5){\line(0,1){5}}

\put(0,-5){\line(1,0){5}}
\put(0,0){\line(1,0){5}}
\put(0,-5){\line(0,1){5}}
\put(5,-5){\line(0,1){5}}
\end{picture}, \phi) & 1 \\
(
\setlength\unitlength{0.3ex}
\begin{picture}(10,5)
\put(0,0){\line(1,0){5}}
\put(0,5){\line(1,0){5}}
\put(0,0){\line(0,1){5}}
\put(5,0){\line(0,1){5}}

\put(5,0){\line(1,0){5}}
\put(5,5){\line(1,0){5}}
\put(5,0){\line(0,1){5}}
\put(10,0){\line(0,1){5}}

\put(0,-5){\line(1,0){5}}
\put(0,0){\line(1,0){5}}
\put(0,-5){\line(0,1){5}}
\put(5,-5){\line(0,1){5}}
\end{picture}, \phi ) & 2 \\
(
\setlength\unitlength{0.3ex}
\begin{picture}(15,5)
\put(0,0){\line(1,0){5}}
\put(0,5){\line(1,0){5}}
\put(0,0){\line(0,1){5}}
\put(5,0){\line(0,1){5}}

\put(5,0){\line(1,0){5}}
\put(5,5){\line(1,0){5}}
\put(5,0){\line(0,1){5}}
\put(10,0){\line(0,1){5}}

\put(10,0){\line(1,0){5}}
\put(10,5){\line(1,0){5}}
\put(10,0){\line(0,1){5}}
\put(15,0){\line(0,1){5}}
\end{picture}, 0) & 1  \\
(
\setlength\unitlength{0.3ex}
\begin{picture}(5,10)
\put(0,0){\line(1,0){5}}
\put(0,5){\line(1,0){5}}
\put(0,0){\line(0,1){5}}
\put(5,0){\line(0,1){5}}

\put(0,5){\line(1,0){5}}
\put(0,10){\line(1,0){5}}
\put(0,5){\line(0,1){5}}
\put(5,5){\line(0,1){5}}

\put(0,-5){\line(1,0){5}}
\put(0,0){\line(1,0){5}}
\put(0,-5){\line(0,1){5}}
\put(5,-5){\line(0,1){5}}
\end{picture}, 0) & 1 \\
(
\setlength\unitlength{0.3ex}
\begin{picture}(10,5)
\put(0,0){\line(1,0){5}}
\put(0,5){\line(1,0){5}}
\put(0,0){\line(0,1){5}}
\put(5,0){\line(0,1){5}}

\put(5,0){\line(1,0){5}}
\put(5,5){\line(1,0){5}}
\put(5,0){\line(0,1){5}}
\put(10,0){\line(0,1){5}}

\put(0,-5){\line(1,0){5}}
\put(0,0){\line(1,0){5}}
\put(0,-5){\line(0,1){5}}
\put(5,-5){\line(0,1){5}}
\end{picture}, 0 ) & 2 \\
(
\setlength\unitlength{0.3ex}
\begin{picture}(10,5)
\put(0,0){\line(1,0){5}}
\put(0,5){\line(1,0){5}}
\put(0,0){\line(0,1){5}}
\put(5,0){\line(0,1){5}}

\put(5,0){\line(1,0){5}}
\put(5,5){\line(1,0){5}}
\put(5,0){\line(0,1){5}}
\put(10,0){\line(0,1){5}}
\end{picture},
\setlength\unitlength{0.3ex}
\begin{picture}(10,5)
\put(0,0){\line(1,0){5}}
\put(0,5){\line(1,0){5}}
\put(0,0){\line(0,1){5}}
\put(5,0){\line(0,1){5}}
\end{picture}) & 3 \\
(
\setlength\unitlength{0.3ex}
\begin{picture}(5,5)
\put(0,-2){\line(1,0){5}}
\put(0,3){\line(1,0){5}}
\put(0,-2){\line(0,1){5}}
\put(5,-2){\line(0,1){5}}

\put(0,8){\line(1,0){5}}
\put(0,8){\line(1,0){5}}
\put(0,3){\line(0,1){5}}
\put(5,3){\line(0,1){5}}
\end{picture},
\setlength\unitlength{0.3ex}
\begin{picture}(10,5)
\put(0,0){\line(1,0){5}}
\put(0,5){\line(1,0){5}}
\put(0,0){\line(0,1){5}}
\put(5,0){\line(0,1){5}}
\end{picture}) & 3
\end{eqnarray*}

\begin{thm}\label{ss}
The algebra ${\mathcal E}_3(1)$ is semisimple, and we have
$$
{\mathcal E}_3(1) = 4M_1({\Bbb C})\oplus 2M_2({\Bbb C})\oplus
2M_3({\Bbb C}).
$$
\end{thm}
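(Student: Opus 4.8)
The plan is to first establish the upper bound $\dim {\mathcal E}_3(1) \le 28$, then exhibit $28$ linearly independent dimensions of irreducible representations, and conclude by Wedderburn that equality holds everywhere and that the algebra is semisimple. For the upper bound I would invoke Corollary~\ref{span}: it provides an explicit spanning set $L, LE_1, LE_2, LE_1E_2, LE_2T_1$ with $L \in \{1, T_1, T_2, T_1T_2, T_2T_1, T_1T_2T_1\}$, giving at most $6 \cdot 5 = 30$ spanning elements. I expect that using Lemma~\ref{lema} (and its consequences, together with relation~\eqref{eq9} specialized to $u=1$, which makes the $T_i$ honest involutions) one can trim this list by the two relations that must appear to cut $30$ down to the required $28$; alternatively, one can sidestep the sharp count and simply argue $\dim {\mathcal E}_3(1)\le 30$, which combined with the lower bound below will still be forced to equal $28$ once we know the semisimple quotient already has dimension $28$ — but that requires knowing there is no radical, so I would instead pin down the bound $\le 28$ directly from the combinatorics of the spanning set.

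For the lower bound, the key input is the list of irreducible representations assembled just before the theorem. Each of the representations in the $n=3$ table arises from an honest algebra homomorphism: the families $(\alpha,\phi)$ and $(\alpha,0)$ come via $\varphi_0, \varphi_1$ (and Proposition~\ref{esen}) from irreducibles of ${\mathcal S}_3$; the representations $(\alpha,+)$ and $(\alpha,-)$ come from Proposition~\ref{descom} with $n=2$; and the two $3$-dimensional ones come from Proposition~\ref{equiv} applied to $V_{(\alpha,\beta)}$ with $\alpha\ne\beta$ bipartitions of $3$. The dimensions listed are $1,1,2,1,1,2,3,3$, so $\sum d_i^2 = 1+1+4+1+1+4+9+9 = 30$.

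Here I must be careful: $1+1+4+1+1+4+9+9=30$, not $28$, so I would recount — the correct enumeration for $n=3$ should be the four one-dimensionals $(\ydiag{3},\phi),(\ydiag{1,1,1},\phi),(\ydiag{3},0),(\ydiag{1,1,1},0)$ together with $(\ydiag{2,1},\phi)$ and $(\ydiag{2,1},0)$ of dimension $2$, and the two $3$-dimensionals from the mixed bipartitions, giving $4\cdot 1^2 + 2\cdot 2^2 + 2\cdot 3^2 = 4+8+18 = 30$; since this overshoots, the genuine claim $4M_1\oplus 2M_2\oplus 2M_3$ has $\mathbb{C}$-dimension $4+8+18=30$, forcing the upper bound to actually be $30$ and not $28$ — so the real work is reconciling the stated decomposition's dimension with the spanning set. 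The honest strategy, then, is: (1) prove $\dim{\mathcal E}_3(1)\le 30$ from Corollary~\ref{span} after checking no further reductions among the $30$ words are forced by relations~\eqref{eq1}--\eqref{eq9}; (2) note the eight representations above are pairwise inequivalent (they restrict differently to $\mathbb{C}{\mathcal S}_3$ via $\varphi_0,\varphi_1$, and the $E_i$ act by distinct scalars/operators, so distinctness is immediate) and yield an algebra map ${\mathcal E}_3(1)\to \bigoplus 4M_1\oplus 2M_2\oplus 2M_3$; (3) this target has dimension $30$, so the map is an isomorphism by dimension count, and semisimplicity follows since the target is semisimple.

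**Main obstacle.** The hard part will be step (1): verifying that the $30$-element spanning set of Corollary~\ref{span} is in fact a \emph{basis}, i.e.\ that the relations force no collapse to lower dimension. One cannot get this for free from the representations alone without first knowing semisimplicity (a potential radical could make $\dim{\mathcal E}_3(1)$ strictly larger than the dimension of its semisimple quotient, but here we need it to be not \emph{smaller}); so the linear independence must be checked by hand, e.g.\ by feeding the $30$ words into the $30$-dimensional faithful-on-the-nose representation $\bigoplus_i \rho_i$ and computing that their images are linearly independent operators — equivalently, exhibiting $30$ linear functionals on the span given by matrix coefficients of the $\rho_i$ that separate the $30$ words. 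This is a finite but genuinely tedious linear-algebra verification, and it is where all the content of the theorem sits; everything else (semisimplicity, the block structure) is then formal from Wedderburn's theorem once the dimension matches.
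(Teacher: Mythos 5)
You have correctly located where the content of the theorem sits, but you do not supply it: your argument stops at declaring that the linear independence of the $30$ spanning words of Corollary~\ref{span} is ``a finite but genuinely tedious linear-algebra verification,'' and that verification is never carried out. It \emph{is} the proof: once $\dim{\mathcal E}_3(1)=30$ is established, the identification with $4M_1({\mathbb C})\oplus 2M_2({\mathbb C})\oplus 2M_3({\mathbb C})$ is the formal Wedderburn step you describe. The paper performs exactly this computation, and does so without any brute-force $30$-dimensional matrix check: writing a vanishing combination $\sum a_wT_w+\sum A_wT_wE_1+\sum B_wT_wE_2+\sum C_wT_wE_1E_2+\sum D_wT_wE_2T_1=0$, it first applies $\varphi_0$ (which kills every $E_i$) to conclude $a_w=0$ in ${\mathbb C}{\mathcal S}_3$, and then applies the morphism $\psi$ of Proposition~\ref{psi}, under which the remaining terms are supported on canonical basis elements of ${\mathbb C}W_3$ of the forms $w$, $wt_1t_2$, $wt_2t_3$, $wt_1t_3$; comparing the coefficients of $1$, $t_1t_2$, $t_2t_3$, $t_1t_3$ (and then translating by suitable $w$) forces $A_w=B_w=C_w=D_w=0$. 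So the gap in your proposal is precisely the step the paper's proof consists of, and it admits this short closed-form argument via the two group-algebra homomorphisms rather than the matrix-coefficient computation you defer.

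A secondary point: your assertion that the lower bound ``cannot be gotten for free from the representations alone'' is not right as stated. If the eight listed modules are irreducible over ${\mathcal E}_3(1)$ and pairwise inequivalent, the map to $\bigoplus_i M_{d_i}({\mathbb C})$ is automatically surjective (Jacobson density / Artin--Wedderburn), giving $\dim{\mathcal E}_3(1)\geq 30$ with no hand computation; what is genuinely not free is the irreducibility over ${\mathcal E}$ of the two $3$-dimensional modules $V_{(\alpha,\beta)}$ (irreducible over $W_3$, but not obviously over the subalgebra $\psi({\mathcal E})$), and your proposal does not address that either, so neither of the two possible routes to the lower bound is actually completed. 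Finally, the first half of your write-up (the bound $28$ and the hunt for ``two relations'' trimming the spanning set) is an arithmetic slip corrected only midway; the target algebra has dimension $4+8+18=30$, matching the $30$-element spanning set exactly, and the point of the paper's computation is that no collapse occurs at all.
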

\begin{proof} We shall prove that the dimension of
${\mathcal E}= {\mathcal E}_3(1)$ is 30. Then, the theorem follows.

Let us consider the linear generators for ${\mathcal E}$
of Corollary ~\ref{span}:
$$
\{T_w, T_wE_1, T_wE_2, T_wE_1E_2, T_wE_2T_1\, ;\, w\in {\mathcal S}_n\}.
$$
We must  prove that if:
$$
\sum_{w\in {\mathcal S}_3}a_wT_w +
\sum_{w\in {\mathcal S}_3}A_w T_wE_1 +
\sum_{w\in {\mathcal S}_3}B_w T_wE_2 +
\sum_{w\in {\mathcal S}_3}C_w T_wE_1E_2 +
\sum_{w\in {\mathcal S}_3}D_wT_wE_2T_1 = 0, \leqno (\ref{ss}.1)
$$
then $a_w=A_w=B_w=C_w=D_w= 0, \quad \forall w$.

Applying $\varphi_0$  to equation ~\ref{ss}.1, we deduce that
$a_{w}=0$, for all $w$.

Now, applying the morphism $\psi$ of Proposition ~\ref{psi}
 to ~\ref{ss}.1, we get the equation
$$
\sum_{w\in {\mathcal S}_3}A_w we_1 +
\sum_{w\in {\mathcal S}_3}B_w we_2 +
\sum_{w\in {\mathcal S}_3}C_w we_1e_2 +
\sum_{w\in {\mathcal S}_3}D_w we_3    = 0, \leqno (\ref{ss}.2)
$$
where $e_3 := s_2e_1 = \frac{1}{2}(1 + t_1t_3)$.

The left part of ~\ref{ss}.2 can be written in terms of elements of
${\mathcal D}:= \{w, wt_1t_2, wt_2t_3, wt_1t_3\}$,
which is a subset of the canonical basis
$\{wt \,; \, w\in {\mathcal S}_n, t\in C \}$ of ${\Bbb C}W_n$. Hence
${\mathcal D}$ is an  independent linear set.
Thus, one can  deduce that $A_w=B_w=C_w=D_w= 0$,  for all $w$, studying
the coefficient of the elements of ${\mathcal D}$. More precisely,

The coefficient  of 1 in  ~\ref{ss}.2 is $ A_1 + B_1 + C_1/2 + D_1$,

The coefficient  of $t_1t_2$ in  ~\ref{ss}.2 is $ A_1 + C_1/2$,

The coefficient  of $t_2t_3$ in  ~\ref{ss}.2 is $ B_1 + C_1/2$,

The coefficient  of $t_1t_3$ in  ~\ref{ss}.2 is $ C_1/2 + D_1$.

As all these coefficients are 0, we get $A_1 = B_1= C_1=D_1=0$.

Finally, multiplying ~\ref{ss}.2 for convenient $w$ one can deduce
$A_w=B_w=C_w=D_w= 0$, $\forall w$.
\begin{rem}
From the proof of Theorem ~\ref{ss}, we deduce that $\psi$ is one-to- one.
\end{rem}

\end{proof}

\end{document}